\documentclass[a4paper,12pt]{article}
\usepackage[latin1]{inputenc}
\usepackage[swedish,english]{babel}
\usepackage{amsmath,amsthm}
\usepackage{amssymb}
\usepackage{graphicx}
\usepackage{subfigure}
\usepackage{natbib}

%**************************************************************************

\newcommand{\Ordo}[1]{\mathcal{O} \left( #1 \right)}
\newcommand{\natop}[2]{\genfrac{}{}{0pt}{1}{#1}{#2}}

\newcommand{\Realdom}{\mathbf{R}}
\newcommand{\Intdom}{\mathbf{Z}}
\newcommand{\stoich}{\mathbb{N}}
\newcommand{\hatw}{\hat{w}}

\newcommand{\Probspace}{\Sigma}
\newcommand{\Probelem}{\sigma}
\newcommand{\Probfiltr}{\mathbf{F}}
\newcommand{\Prob}{\mathbf{P}}

\newcommand{\maxW}{\bar{W}}

\newcommand{\Fine}{\mathcal{F}}
\newcommand{\Fineh}{\mathcal{F}^{h}}
\newcommand{\Coarse}{\mathcal{C}}
\newcommand{\err}{e}
\newcommand{\merr}{\bar{e}}
\newcommand{\nsol}{\tilde{X}}
\newcommand{\xsol}{X}
\newcommand{\StabF}{S_{\Fine}}
\newcommand{\StabC}{S_{\Coarse}}
\newcommand{\dt}{\Delta t}

\numberwithin{equation}{section}
\numberwithin{table}{section}
\numberwithin{figure}{section}

\theoremstyle{plain}
\newtheorem{theorem}{Theorem}[section]
\newtheorem{lemma}[theorem]{Lemma}

\newtheorem{proposition}[theorem]{Proposition}

\theoremstyle{definition}
\newtheorem{definition}{Definition}[section]
\newtheorem{assumption}[definition]{Assumption}

\theoremstyle{remark}
\newtheorem*{remark}{Remark}

%\newtheorem*{notation}{Notation}
%\newtheorem*{claim}{Claim}
%\newtheorem*{summary}{Summary}
%\newtheorem*{case}{Case}
%\newtheorem*{conclusion}{Conclusion}

%**************************************************************************

\title{Parallel in Time Simulation of Multiscale Stochastic Chemical Kinetics}

\author{Stefan Engblom$^{\mbox{\tiny 1}}$ \thanks{Financial support
    has been obtained from the Swedish National Graduate School in
    Mathematics and Computing and the Swedish Foundation for Strategic
    Research.}}

\date{August 27, 2008}

%**************************************************************************

\begin{document}

\selectlanguage{english}

\maketitle

\vspace{-1cm}

\begin{center}
  \footnotesize{\em $^{\mbox{\tiny\rm 1}}$Div of Scientific Computing,
    Dept of Information Technology \\
    Uppsala University, P.~O.~Box 337, SE-75105 Uppsala, Sweden \\
    email: {\tt stefane@it.uu.se} \\ [5pt]}
\end{center}

\begin{abstract}

  A version of the time-parallel algorithm parareal is analyzed and
  applied to stochastic models in chemical kinetics. A fast predictor
  at the macroscopic scale (evaluated in serial) is available in the
  form of the usual reaction rate equations. A stochastic simulation
  algorithm is used to obtain an exact realization of the process at
  the mesoscopic scale (in parallel).

  The underlying stochastic description is a jump process driven by
  the Poisson measure. A convergence result in this arguably difficult
  setting is established suggesting that a homogenization of the
  solution is advantageous. We devise a simple but highly general such
  technique.

  Three numerical experiments on models representative to the field of
  computational systems biology illustrate the method. For non-stiff
  problems, it is shown that the method is able to quickly converge
  even when stochastic effects are present. For stiff problems we are
  instead able to obtain fast convergence to a homogenized solution.

  Overall, the method builds an attractive bridge between on the one
  hand, macroscopic deterministic scales and, on the other hand,
  mesoscopic stochastic ones. This construction is clearly possible to
  apply also to stochastic models within other fields.

\vspace{0.5cm}

\noindent
\textbf{Keywords:} Parareal, reaction rate equations, jump process, 
homogenization, next reaction method, stochastic reaction-diffusion.

\vspace{0.5cm}

\noindent
\textbf{AMS subject classification:} 65C40, 60J75, 60J22, 60H35, 68W10.

%-65xx Numerical analysis:
% 65C40 Computational Markov chains
%
%-60Jxx Markov processes:
% 60J75 Jump processes
% 60J22 Computational methods in Markov chains
%
%-60-xx Probability theory and stochastic processes:
%-60Hxx Stochastic analysis:
% 60H35 Computational methods for stochastic equations
%
%-68-xx Computer science
%-68Wxx Algorithms
% 68W10 Parallel algorithms

\end{abstract}

%**************************************************************************

\section{Introduction}

It has been shown in several studies \cite{stochGene, noisyBusiness,
noisygeneregul, noisygene} that stochastic descriptions of biochemical
networks are necessary for understanding and explaining the mechanisms
inside living cells. Such networks often contain species present in
copy numbers down to a few hundreds \cite{guptasarama}, hinting why
discreteness and stochasticity becomes important. Examples of when
random effects are pronounced include the regularity and drift of
circadian oscillations \cite{circadian2D}, spontaneous separation in
bistable systems \cite{bistable_RDME} and the creation of new
steady-states \cite{newsteadystates_RDME}.

An accurate such stochastic model can be obtained directly from
microphysical considerations as a consequence of the Markov property
\cite{gillespieCME}. Essentially, in a diffusion-controlled system,
reactive collisions between molecules are rare events, implying a
rapid loss of dependence on the systems history.

Sample realizations of any given chemical network can be obtained by
randomly firing reactions and correspondingly updating the systems
state; the most well-known such algorithm is Gillespie's
\emph{stochastic simulation algorithm} (SSA)
\cite{gillespie}. Although conceptually simple, detailed simulation of
complex reaction networks as found for example in living cells remains
a computationally very intensive problem.

The \emph{parareal} algorithm for the solution of evolution equations
on a time-interval $[0,T]$ was suggested in the note \cite{parareal1}
as a method for the \emph{para}llel solution of problems in
\emph{real} time. An improved version, better tuned to nonlinear
problems, was subsequently applied to problems in control theory
\cite{pararealctrl} and molecular dynamics \cite{pararealMol} to
mention just a few.

The method is built around a predictor-corrector step in which a
\emph{coarse} solver is used as a preconditioner to a \emph{fine}
solver. It thus incorporates both multigrid and domain decomposition
ideas, but is unique in that it is a purely parallel algorithm; it has
no value when executed on a serial machine.

The idea suggested in this paper is to use the macroscopic
approximation of the chemical system, the \emph{reaction rate
equations}, as the coarse solver while a stochastic simulation is used
as the fine scale solver. This setup is available also for other types
of stochastic evolution equations but has not received much
attention. The reason is probably that any deterministic model is a
quite poor formal approximation. On the other hand, any macroscopic
trend in the solution is likely to be quickly obtained. Also, with a
deterministic coarse solver, highly efficient implicit integrators are
available which greatly simplifies the implementation.

The present paper is related to references \cite{pararealSDE,
pararealMreduct}. In the former, the analysis of parareal when applied
to ordinary differential equations (ODEs) and stochastic ODEs (SDEs)
driven by Wiener processes is treated. In the latter, the parareal
algorithm is applied to linear mass-balance (deterministic) chemical
kinetics with disparate rates by using a coarse solver in the form of
a reduced model.

The paper is divided into sections as follows: in
Section~\ref{sec:stochkinetics} the mathematical model of stochastic
kinetics is described in some detail and we briefly review the
parareal algorithm in Section~\ref{sec:parareal}. We present a
convergence analysis in Section~\ref{sec:anal}; the main result
indicates that strong convergence might be slow for systems with large
Lipschitz constants. A way around this is suggested in the form of a
simple but very general homogenization procedure in parallel. This
``parareal homogenization'' could well be an interesting approach to
many other stiff problems. We perform three quite different numerical
experiments in Section~\ref{sec:ex} where the practical implementation
of the algorithm is also discussed. Finally, in
Section~\ref{sec:concl} we conclude the paper by discussing the
outcome of the experiments and suggesting some future work and
generalizations.

%**************************************************************************

\section{Stochastic chemical kinetics}
\label{sec:stochkinetics}

In this section we discuss the usual mesoscopic stochastic model for
chemical reactions (the master equation). As it is needed in the
analysis in Section~\ref{sec:anal}, we shall also give an equivalent
and mathematically more appealing representation in terms of a certain
stochastic jump process. For monographs on applications of the master
equation, see \cite{Gardiner, VanKampen}. The mathematical treatment
of jump processes can be found in \cite{LevySDEs, pointProc,
Markovappr, SDEs, SDEsDiffusion}.

\subsection{Chemical reactions, the master equation and a hierarchy of
  methods}

We consider in this paper a general homogenous chemical network
consisting of $D$ different species reacting according to $R$
prescribed reactions. At any given time $t$, the \emph{state} of the
system is an integer vector $x \in \Intdom_{+}^{D} = \{0,1,2, \ldots
\}^{D}$ counting the number of molecules of each kind. In a stochastic
description, each reaction is a change of the state according to a
certain transition rule and the intensity of this process is governed
by a \emph{reaction propensity}, $w_{r}: \Intdom_{+}^{D} \to
\Realdom_{+}$. This is the transition probability per unit of time for
moving from the state $x$ to $x-\stoich_{r}$;
\begin{align}
  \label{eq:prop}
  x &\xrightarrow{w_{r}(x)} x-\stoich_{r},
\end{align}
where by convention, $\stoich_{r} \in \Intdom^{D}$ is the transition
step and is the $r$th column in the \emph{stoichiometric matrix}
$\stoich \in \Intdom^{D \times R}$.

For instance, in a given well-stirred volume $\Omega$ the
\emph{elementary} reactions can be written using the states $x =
[a,b]^{T}$,
\begin{align}
  \label{eq:el1}
  \emptyset \xrightarrow{k_{1} \Omega} A,                               \\
  A \xrightarrow{k_{2} a} \emptyset,                                    \\
  A+A \xrightarrow{k_{3} a(a-1)/\Omega} \emptyset,                      \\
  \label{eq:el4}
  A+B \xrightarrow{k_{4} ab/\Omega} \emptyset.
\end{align}
The propensities are generally scaled such that
\begin{align}
  \label{eq:densitydependent}
  w_{r}(x) &= \Omega u_{r}(x/\Omega)
\end{align}
for some function $u_{r}$ which does not depend on
$\Omega$. Intensities of this form are called \emph{density dependent}
and arise naturally in a number of situations
\cite[Ch.~11]{Markovappr}.

Arguably the most common stochastic description of chemically reacting
systems is the \emph{chemical master equation} (CME). To state it, let
$p(x,t)$ be the probability that a certain number $x$ of molecules is
present at time $t$. The CME \cite{Gardiner, VanKampen} is then given
by
\begin{align}
  \label{eq:Master}
  \frac{\partial p(x,t)}{\partial t} &= 
  \sum_{\natop{r = 1}{x+\stoich_{r}^{-} \ge 0}}^{R}
  w_{r}(x+\stoich_{r})p(x+\stoich_{r},t)-
  \sum_{\natop{r = 1}{x-\stoich_{r}^{+} \ge 0}}^{R}
  w_{r}(x)p(x,t),
\end{align}
where the transition steps are decomposed into positive and negative
parts as $\stoich_{r} = \stoich_{r}^{+}+\stoich_{r}^{-}$.

In a pioneering paper from 1976, Gillespie \cite{gillespie} showed
that exact samples from the master equation can be obtained via the
SSA. This follows essentially by identifying it with the forward
Kolmogorov equation associated to a certain continuous-time Markov
chain to be introduced shortly. Importantly, Gillespie
\cite{gillespieCME} later also showed that the CME is an exact
physical description when the system is well-stirred and in thermal
equilibrium.

% *** the story behind kinetic Monte Carlo is about the same

For chemical networks where the number of reactions per unit of time
is large, detailed simulation with SSA becomes inefficient. As a
remedy, Gillespie \cite{tau_leap} proposed the \emph{tau-leap method}
with the ability to simultaneously ``leap'' over many reactions at
once. The method has since been developed and improved by several
authors \cite{postleap, tau_li, tau_leap_anal}.

% *** one could mention the development SSA, NRM, Optimized direct method, 
% Sorting direct method, Logarithmic direct method, KLM here

A related issue is \emph{stiffness}: many interesting models become
prohibitively expensive to solve by explicitly simulating the various
involved scales. Several different model reduction techniques have
been proposed for this situation \cite{smalltimesteps, nestedSSA,
haseltine_HSSA}. Also, an implicit version of the tau-leap method has
been developed \cite{impl_tau_leap}, but this method converges in a
very weak sense only \cite{stab_tau_leap, stiffSDEimpl,
tau_leap_anal}.

An alternative approximation is the \emph{Langevin equation}
\cite[Ch.~7]{Markovappr} which is a \emph{continuous} SDE driven by
Wiener processes. On ignoring the noise term and only keeping the
drift, the \emph{reaction rate} equations are obtained. This is a set
of $D$ ODEs approximately evolving the expectation value of the
mesoscopic model. The hierarchy of solution methods (SSA $\to$
tau-leap $\to$ Langevin SDE $\to$ ODE) can thus be thought of as a
transition between the mesoscopic model and the macroscopic one
\cite{tau_leap}.

\subsection{Sample path representation}
\label{subsec:samplepath}

A representation equivalent to, but perhaps more direct than the
master equation~\eqref{eq:Master}, was proposed fairly recently in
\cite{ME2SDE}. The idea is to construct a sample path representation in
the form of jump SDEs driven by Poisson processes. As we shall see and
as pointed out in \cite{tau_li, ME2SDE}, this representation is
particular useful for numerical analysis.

We thus introduce the stochastic variable $X(t) \in \Intdom_{+}^{D}$
counting at time $t \ge 0$ the number of molecules of each
type. Reactions are understood to occur instantly and thus the process
is right continuous only; the notation $X(t-)$ is used to denote the
value of the process prior to any reactions occurring at time $t$. We
assume the existence of a probability space\footnote{Note that,
throughout the paper, the letter $\Omega$ is \emph{not} used for the
probability sample space, but consistently denotes the system volume.}
$(\Probspace,\Probfiltr,\Prob)$ with the filtration $\Probfiltr_{t \ge
0}$ containing $R$-dimensional standard Poisson processes. The
transition probabilities in~\eqref{eq:prop} define
\emph{counting processes} $\pi_{r}(t)$ \cite[Ch.~2.5]{pointProc}
according to
\begin{align}
  E &\left[\pi_{r}(t+dt)-\pi_{r}(t) | \Probfiltr_{t} \right] =
  w_{r}(X(t-))\,dt+o(dt).                                               \\
\intertext{In turn, the counting processes determine the process $X(t)$ 
\cite[Ch.~6.4]{Markovappr},}
  \label{eq:Poissrepr}
  X_{t} &= X_{0}-\sum_{r = 1}^{R} \stoich_{r} \pi_{r}(t).               \\
\intertext{A representation of $\pi_{r}(t)$ in terms of a unit-rate 
Poisson process $\Pi_{r}$ in an \emph{operational} or \emph{scaled} time 
can also be given \cite[Ch.~3.1, 4.4]{pointProc},}
  \label{eq:count}
  \pi_{r}(t) &= \Pi_{r} \left( \int_{0}^{t} w_{r}(X(s-)) \, ds \right).
\end{align}

We now wish to move beyond~\eqref{eq:Poissrepr} and obtain a more
transparent representation in the form of a jump SDE driven by the
proper measure. Processes with nonlinearly dependent jump intensities
are quite difficult to handle since the noise enters with a
complicated dependence on the state. This is in contrast to the more
familiar It\^{o} SDE driven linearly by Wiener processes.

For the present purposes, the Poisson random measure
\cite[Ch.~II.1c]{stochLimits} $\mu(dt \times dz; \, \Probelem)$ with
$\Probelem \in \Probspace$ defines an increasing sequence of arrival
times $\tau_{i} \in \Realdom_{+}$ with corresponding ``marks'' $z_{i}$
uniformly distributed in $[0,\maxW]$ \cite[Ch.~1.4]{pointProc}, where
$\maxW$ is to be defined shortly. The deterministic intensity of
$\mu(dt \times dz)$ is the Lebesgue measure, $m(dt \times dz) = dt
\times dz$ \cite[Part~II, Ch.~2§4]{SDEs}.

The propensities have been general nonlinear functions up to this
moment but we need to impose conditions so as to bound the total
intensity of all reaction channels. It is not difficult to see that a
\emph{finite} intensity implies a bounded solution in the mean square 
sense for finite times (see Lemma~\ref{lem:localstd} below). For
convenience, we take the approach in \cite{tau_li} and formally
specializes the investigation to closed systems. This implies the
existence of a bounded set $S$ containing at any time $t$ the state of
the system and is reasonable from physical considerations; ---any real
application must necessarily involve a bounded total number of
molecules.

Summarizing, we thus have
\begin{assumption}
  \label{ass:bounded}
  The state of the chemical network satisfies $X(t) \in S \subset
  \Intdom_{+}^{D}$. When confined to this set, all propensities are
  Lipschitz continuous in their argument with respect to the Euclidean
  norm, $|w_{r}(x)-w_{r}(y)| \le L_{r}\|x-y\|$. It follows that all
  propensities are bounded such that
\begin{align}
  \maxW_{r} := \sum_{s = 1}^{r} \max_{x \in S} w_{s}(x)
\end{align}
  exists and is bounded for all $r$. We define also the numbers $L =
  \sum_{r} L_{r}$, $\maxW = \maxW_{R}$ and $W = \max_{x \in S}
  \sum_{r} w_{r}(x)$. Note that $W \le \maxW$.
\end{assumption}

The frequency of each reaction is controlled through a set of
indicator functions $\hatw_{r} : S \times [0,\maxW] \to \{0,1\}$
defined as follows:
\begin{align}
  \hatw_{r}(x; \, z) &= \left\{ \begin{array}{l}
    1 \quad \mbox{ if } 0 \le z-\maxW_{r-1} < w_{r}(x),                 \\
    0 \quad \mbox{ otherwise.}
  \end{array} \right.
\end{align}
We can now represent the counting process~\eqref{eq:count} in terms of
the Poisson random measure \cite[Ch.~II.1d]{stochLimits} via a
\emph{thinning} of the measure using an acceptance-rejection strategy
\cite[Ch.~4.3]{pointProc}:
\begin{align}
  \label{eq:countrepr}
  \pi_{r}(t) &= \int_{0}^{t} \int_{0}^{\maxW}
  \hatw_{r}(X(t-); \, z) \, \mu(dt \times dz).
\end{align}
Note that this thinning is not the same as in \cite{tau_li, ME2SDE}
where instead $z \in [0,W]$; the present construction yields sharper
estimates when comparing trajectories formed from different initial
data.

The representation~\eqref{eq:countrepr} combined
with~\eqref{eq:Poissrepr} leads us to the sample path representation
in terms of a Skorohod jump SDE \cite[Ch.~IV.9]{SDEsDiffusion},
\begin{align}
  \label{eq:SDE}
  dX_{t} &= -\sum_{r = 1}^{R} \stoich_{r} \int_{0}^{\maxW}
  \hatw_{r}(X(t-); \, z) \, \mu(dt \times dz).
\end{align}
Often one is interested in separating~\eqref{eq:SDE} into its
``drift'' and ``jump'' terms \cite[Part~II, Ch.~2§4]{SDEs},
\begin{align}
  \nonumber
  dX_{t} &= -\sum_{r = 1}^{R} \stoich_{r} w_{r}(X(t-)) \, dt-
  \sum_{r = 1}^{R} \stoich_{r} \int_{0}^{\maxW} \hatw_{r}(X(t-); \, z)
  (\mu-m)(dt \times dz)                                                 \\
  \label{eq:SDEsplit}
  &=: dX_{D}(t)+dX_{J}(t). 
\end{align}
We will use the initial value convention $X_{D}(0) = X(0)$ and
$X_{J}(0) = 0$ throughout the paper. Note that the second term
in~\eqref{eq:SDEsplit} driven by the compensated measure $(\mu-m)$ is
a martingale of mean zero. On taking expectation values and
approximating, we obtain $dEX_{t} \approx d\tilde{x}(t)$ where
\begin{align}
  \label{eq:rate}
  d\tilde{x}(t) &= -\sum_{r = 1}^{R} \stoich_{r} w_{r}(\tilde{x}(t)) \, dt
\end{align}
and $\tilde{x}(0) = EX(0)$. This approximation constitutes, up to a
scaling by the system volume $\Omega$, the usual macroscopic
reaction-rate equations.

%**************************************************************************

\section{The parareal algorithm}
\label{sec:parareal}

We continue by giving a short account of the parareal algorithm. For a
quick introduction with additional references the survey
\cite{parareal} can be recommended. See also \cite{stabPararealPDE,
pararealSurv, pararealAnal} for abstract convergence results,
stability analysis and more.

Consider the general time-dependent problem written in operator form,
\begin{align}
  \label{eq:gp}
  \dot{u} &= -Au, t \in [0,T] \mbox{ with } u(0) = u_{0}.
\end{align}
Write the solution propagator $\Fine$ as
\begin{align}
  \Fine_{t}(y) &= y-\int_{0}^{t}Au(t) \, dt,
\end{align}
where $u$ solves~\eqref{eq:gp} subject to $u(0) = y$. The parareal
algorithm assumes that a coarse solver $\Coarse$ is available that
does the same thing but faster and presumably less accurate.

Time is now discretized in $N = T/\dt$ chunks of time and any solver
$S \in \{\Fine_{\dt},\Coarse_{\dt}\}$ can be used to compute a
numerical solution:
\begin{align}
  \left[ \begin{array}{cccc}
      I  & 0  &  0 &  0                                                 \\
      -S & I  &  0 &  0                                                 \\
      0  & -S &  I &  0                                                 \\
      0  & 0  & -S &  I
    \end{array} \right] \left[ \begin{array}{c}
      v_{0}                                                             \\
      v_{1}                                                             \\
      v_{2}                                                             \\
      v_{3}
    \end{array} \right]
  &= \left[ \begin{array}{c}
      u_{0}                                                             \\
      0                                                                 \\
      0                                                                 \\
      0
    \end{array} \right],
\end{align}
or simply $B(S)v = u_{0}$ in matrix notation. The parareal algorithm
emerges as the fix-point iteration obtained by using
$B(\Coarse_{\dt})^{-1}$ as an approximate inverse to $B(\Fine_{\dt})$:
\begin{align}
  v_{k+1} &= v_{k}-B(\Coarse_{\dt})^{-1}(B(\Fine_{\dt})v_{k}-u_{0}).
\end{align}
Let $v_{0,0} = u_{0}$ and $v_{0,n} = \Coarse_{\dt}v_{0,n-1}$ for
$n \in [1,N]$ to start up the algorithm. One readily verifies the
recursion
\begin{align}
  v_{k,n} = \Fine_{\dt} v_{k-1,n-1}-
  [\Coarse_{\dt} v_{k-1,n-1}-\Coarse_{\dt} v_{k,n-1}],
\end{align}
where the evaluation of $\Fine$ is trivially parallel.

At any point in the algorithm, the preconditioned residual is given by
\begin{align}
  \label{eq:pres}
  p_{k} &= v_{k}-v_{k+1},
\end{align}
and is useful as an estimate of the error.

A nice survey of the parareal method is found in \cite{pararealSurv}
where the connection to some earlier methods is made. It turns out
that one can view the process as a Newton method applied to a certain
nonlinear problem using a Jacobian approximated by finite
differences. This partially explains the superlinear convergence often
observed on bounded intervals. It is known that the parareal algorithm
converges poorly for problems where the operator has eigenvalues with
large imaginary parts \cite{pararealAnal}. Another interesting result
is that the parareal scheme can be made unconditionally stable by
using a suitable relaxation of the coarse solver
\cite{stabPararealPDE}.

%**************************************************************************

\section{Analysis}
\label{sec:anal}

In this section we analyze the parareal algorithm obtained by using
the rate equations~\eqref{eq:rate} and the jump SDE~\eqref{eq:SDE} as
the coarse and fine solver, respectively. In
Section~\ref{subsec:prels} we give some results that cover local
properties of solutions to these equations. The actual convergence
analysis is found in Section~\ref{subsec:convergence} where
convergence in mean square and convergence of the first moment is
investigated. As noted in \cite{pararealSDE}, weak convergence is not
so interesting for the parareal algorithm since weak estimates
typically come into play when many trajectories are simultaneously
computed in a Monte Carlo simulation; parallelism can then be
trivially achieved with optimal efficiency. Our interest in the
convergence of the first moment mainly stems from the possibility to
homogenize the model on the fly by a suitable averaging filter. This
idea is discussed in Section~\ref{subsec:homogenization}.

Because the process $X(t)$ is integer valued, we mention here for
clarity the trivial inequality
\begin{align}
  \label{eq:intineq}
  |X| \le X^{2} &\Longrightarrow E|X| \le EX^{2}.                       \\
\intertext{When $X$ is large, \eqref{eq:intineq} is an overestimate and 
we will switch to the standard inequality}
  \label{eq:CSineq}
  E|X| &\le (EX^{2})^{1/2}.
\end{align}
For simplicity and without serious loss of generality, we treat the
1-D case only.

\subsection{Preliminaries}
\label{subsec:prels}

We start by citing the following convenient lemma.

\begin{lemma}[Lemma~3.1 and 3.2 in \cite{tau_li}; see also Lemma~2.3.2 in 
  \cite{LevySDEs}]
  \label{lem:jump}
  Define $\Delta X_{t} = X(t)-X(t-)$. Then for any fixed time $s > 0$,
  $\Delta X_{s} = 0$ (a.s.). Furthermore, if $w$ is a continuous
  function and $t_{1} < t_{2}$, then
\begin{align}
  \int_{t_{1}}^{t_{2}} \Delta w(X_{t}) \, dt &= 0.
\end{align}
\end{lemma}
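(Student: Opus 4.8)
The plan is to reduce both claims to two elementary facts about the driving Poisson random measure $\mu$: that a single fixed time carries no atoms, and that on any bounded interval $\mu$ has only finitely many atoms almost surely.

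For the first statement, recall from~\eqref{eq:SDE} that the jump of $X$ at a fixed time $s$ is
\begin{align*}
  \Delta X_{s} &= -\sum_{r=1}^{R}\stoich_{r}\int_{\{s\}}\int_{0}^{\maxW}\hatw_{r}(X(s-);\,z)\,\mu(dt\times dz),
\end{align*}
so it suffices to show $\mu(\{s\}\times[0,\maxW]) = 0$ almost surely. Since $\mu$ has deterministic intensity $m(dt\times dz) = dt\times dz$, the count $\mu(\{s\}\times[0,\maxW])$ is Poisson distributed with mean $m(\{s\}\times[0,\maxW]) = 0$, hence is zero a.s.; therefore $\Delta X_{s} = 0$ a.s. Equivalently, one may note that the jump times of the finite-activity counting processes $\pi_{r}$ are, for a fixed $s$, almost surely different from $s$.

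For the second statement, argue pathwise. Write $\Delta w(X_{t}) = w(X_{t})-w(X(t-))$; since $w$ is continuous and $X$ stays in the bounded set $S$ by Assumption~\ref{ass:bounded}, the maps $t\mapsto w(X_{t})$ and $t\mapsto w(X(t-))$ are bounded and respectively right- and left-continuous, so $t\mapsto\Delta w(X_{t})$ is a bounded measurable function and the Lebesgue integral in question is well defined. This integrand is nonzero only at those $t\in[t_{1},t_{2}]$ where $X$ itself jumps. By Assumption~\ref{ass:bounded} the total propensity of all reaction channels is bounded by $\maxW$, so the number of atoms of $\mu$ in $[t_{1},t_{2}]\times[0,\maxW]$ — which is Poisson with mean $\maxW(t_{2}-t_{1})$ — is almost surely finite; a fortiori the jump times of $X$ in $[t_{1},t_{2}]$ form an almost surely finite set, hence a Lebesgue null set. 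An integrand that vanishes off a null set has zero integral, whence $\int_{t_{1}}^{t_{2}}\Delta w(X_{t})\,dt = 0$ almost surely.

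The whole argument is bookkeeping once this viewpoint is adopted, and there is no serious obstacle; the one point needing care is the almost-sure local finiteness of the jump set, which is precisely where the closed-system hypothesis (finiteness of $\maxW$) in Assumption~\ref{ass:bounded} is used — absent a bound on the total propensity the jump times could accumulate and the null-set argument would break down. Alternatively, both claims may simply be quoted from \cite{tau_li,LevySDEs}, the above being a streamlined version of the proofs given there.
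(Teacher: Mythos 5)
Your argument is correct. Note, however, that the paper does not prove this lemma at all: it is explicitly cited from Lemma~3.1 and 3.2 of \cite{tau_li} (see also Lemma~2.3.2 of \cite{LevySDEs}), so there is no in-paper proof to compare against. Your two reductions are exactly the standard ones underlying those references: the fixed-time claim follows because $\mu(\{s\}\times[0,\maxW])$ is Poisson with mean $m(\{s\}\times[0,\maxW])=0$, hence zero a.s., and the integral claim follows because the integrand $\Delta w(X_{t})$ is measurable, bounded on $S$, and supported on the a.s.\ finite (in any case at most countable) set of jump times of $X$, which is Lebesgue-null. One small caveat: your closing remark that without a bound on the total propensity ``the null-set argument would break down'' is overstated --- the jump times of a c\`adl\`ag path are countable, hence Lebesgue-null, regardless of accumulation; what the finiteness of $\maxW$ (Assumption~\ref{ass:bounded}) really buys is that the process does not explode, so that $X$, and hence the integrand, is well defined on all of $[t_{1},t_{2}]$. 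This does not affect the validity of your proof in the setting of the paper.
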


We continue by establishing two lemmas concerning the stability of the
jump process $X(t)$.

\begin{lemma}
  \label{lem:localstd}
  Let $X(t \ge 0)$ evolve according to~\eqref{eq:SDE} and define the
  jump term $X_{J}(t)$ as in~\eqref{eq:SDEsplit} with $X_{J}(0) =
  0$. Then
\begin{align}
  \label{eq:localstd1}
  E X_{J}(t)^{2} &\le \|\stoich\|^{2}W t.                               \\
\intertext{Furthermore,}
  \label{eq:localstd2}
  E [X_{t}-X_{0}]^{2} &\le 2\|\stoich\|^{2}W t+
  2\|\stoich\|^{2}W^{2} t^{2}.
\end{align}
\end{lemma}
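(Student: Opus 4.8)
The plan is to split $X_t - X_0$ into its drift part $X_D(t) - X_0$ and its martingale (jump) part $X_J(t)$ as in~\eqref{eq:SDEsplit} (recall the conventions $X_D(0) = X(0)$, $X_J(0) = 0$), bound each separately in mean square, and recombine with the elementary inequality $(a+b)^2 \le 2a^2 + 2b^2$. Estimate~\eqref{eq:localstd1} is the heart of the matter, and~\eqref{eq:localstd2} then follows with little extra work.

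For~\eqref{eq:localstd1} I would use that $X_J$ is a square-integrable martingale (square-integrability being guaranteed by Assumption~\ref{ass:bounded}, with a standard localization by a stopping time available if one wants to be fully rigorous) together with the isometry for stochastic integrals against the compensated Poisson measure $\mu - m$, which gives
\[
  E X_J(t)^2 = E \int_0^t \int_0^{\maxW} \Bigl( \sum_{r=1}^R \stoich_r \hatw_r(X(s-);z) \Bigr)^2 \, ds \, dz .
\]
The key structural observation is that the marks separating the reaction channels were chosen precisely so that the indicators $\hatw_r(\cdot;z)$ have pairwise disjoint supports in $z$ — the intervals $[\maxW_{r-1}, \maxW_{r-1}+w_r(x))$ are disjoint by the definition of $\maxW_r$ — and that $\hatw_r^2 = \hatw_r$ since $\hatw_r$ takes values in $\{0,1\}$. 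Hence the squared sum collapses to $\sum_r \stoich_r^2 \hatw_r(X(s-);z)$, and integrating in $z$ returns $\sum_r \stoich_r^2 w_r(X(s-))$. Using $\stoich_r^2 \le \|\stoich\|^2$ and $\sum_r w_r(X(s-)) \le W$ (valid since $X(s-) \in S$ by Assumption~\ref{ass:bounded}) yields $E X_J(t)^2 \le \|\stoich\|^2 W t$.

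For the drift part, $X_D(t) - X_0 = -\int_0^t \sum_r \stoich_r w_r(X(s-)) \, ds$, so Cauchy--Schwarz in the time variable followed by Cauchy--Schwarz in $r$ with weights $w_r$ gives
\[
  (X_D(t)-X_0)^2 \le t \int_0^t \Bigl( \sum_r \stoich_r w_r(X(s-)) \Bigr)^2 ds
  \le t \int_0^t \Bigl( \sum_r \stoich_r^2 w_r(X(s-)) \Bigr) \Bigl( \sum_r w_r(X(s-)) \Bigr) ds \le \|\stoich\|^2 W^2 t^2 .
\]
Writing $[X_t-X_0]^2 = \bigl[(X_D(t)-X_0) + X_J(t)\bigr]^2 \le 2(X_D(t)-X_0)^2 + 2 X_J(t)^2$, taking expectations, and inserting~\eqref{eq:localstd1} produces~\eqref{eq:localstd2}.

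The main obstacle is not the algebra but the rigorous justification of the second-moment identity for the Poisson-measure integral: one needs the a priori square-integrability of $X_J(t)$ before the isometry can be applied, and one must be careful about the $X(t-)$ versus $X(t)$ distinction inside the time integrals. Both points are routine — the former via a localization argument using the boundedness in Assumption~\ref{ass:bounded}, the latter via Lemma~\ref{lem:jump}, which guarantees that replacing $X(s-)$ by $X(s)$ under $ds$-integration changes nothing.
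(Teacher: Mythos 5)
Your proof is correct and follows essentially the same route as the paper: the It\^{o} isometry for the compensated Poisson integral combined with the idempotence (equivalently, the disjoint $z$-supports) of the indicators $\hatw_{r}$ for~\eqref{eq:localstd1}, and the split $(a+b)^{2}\le 2a^{2}+2b^{2}$ of the drift/jump decomposition for~\eqref{eq:localstd2}. The only difference is that you spell out the Cauchy--Schwarz bound $E\bigl(\int_{0}^{t}\sum_{r}\stoich_{r}w_{r}(X(s-))\,ds\bigr)^{2}\le\|\stoich\|^{2}W^{2}t^{2}$, which the paper leaves implicit.
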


\begin{proof}
The left side of~\eqref{eq:localstd1} can be written
\begin{align*}
  &\phantom{\le} E\left( \int_{0}^{t} \int_{0}^{\maxW} \sum_{r=1}^{R}
  -\stoich_{r} \hatw_{r}(X(s-); \, z)(\mu-m)(ds \times dz) \right)^{2}  \\
  &= E \int_{0}^{t} \int_{0}^{\maxW} \left( \sum_{r=1}^{R}
  -\stoich_{r} \hatw_{r}(X(s-); \, z) \right)^{2} m(ds \times dz)       \\
  &\le E \int_{0}^{t} \int_{0}^{\maxW} \|\stoich\|^{2} \sum_{r=1}^{R}
  \hatw_{r}(X(s-); \, z)^{2} \, m(ds \times dz),
\end{align*}
where the It\^{o} isometry for jump processes was used \cite[Part~II,
Ch.~2§5]{SDEs} \cite[Lemma~4.2.2]{LevySDEs}. Using that $\hatw_{r}^{2}
= \hatw_{r}$ we complete the proof
of~\eqref{eq:localstd1}. For~\eqref{eq:localstd2} we use the
drift/jump split~\eqref{eq:SDEsplit} and the inequality $(a+b)^{2} \le
2a^{2}+2b^{2}$ to obtain
\begin{align*}
  E [X_{t}-X_{0}]^{2} &\le 2EX_{J}(t)^{2}+2E\left( \int_{0}^{t} 
  \sum_{r=1}^{R} -\stoich_{r} w_{r}(X(s-)) \, ds \right)^{2}.
\end{align*}
\end{proof}

\begin{lemma}
  \label{lem:Cstab}
  Let the drift term be defined as in~\eqref{eq:SDEsplit} with
  $X_{D}(0) = X(0)$ and let $\tilde{x}(t)$ be the reaction rate
  approximation~\eqref{eq:rate} to the expected value of $X(t)$ with
  $\tilde{x}(0) = X(0)$. Then
\begin{align}
  \label{eq:Cstab}
  E [X_{D}(t)-\tilde{x}(t)]^{2} &\le
  L^{2}\|\stoich\|^{4}W t^{3} \exp (2L^{2}\|\stoich\|^{2}t^{2}).
\end{align}
\end{lemma}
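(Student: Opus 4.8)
The plan is to reduce~\eqref{eq:Cstab} to a Gronwall inequality for $g(t) := E[X_{D}(t)-\tilde{x}(t)]^{2}$. Since $X_{D}(0) = \tilde{x}(0) = X(0)$, subtracting~\eqref{eq:rate} from the drift part of~\eqref{eq:SDEsplit} gives, with $\err(t) := X_{D}(t)-\tilde{x}(t)$,
\begin{align*}
  \err(t) &= -\int_{0}^{t} \sum_{r=1}^{R} \stoich_{r}
  \bigl[ w_{r}(X(s-))-w_{r}(\tilde{x}(s)) \bigr] \, ds.
\end{align*}
First I would apply the Cauchy--Schwarz inequality in the time variable and then, pointwise in $s$, over the $R$ reaction channels, followed by the Lipschitz bound of Assumption~\ref{ass:bounded}; using $\sum_{r} L_{r}^{2} \le L^{2}$ this yields
\begin{align*}
  \err(t)^{2} &\le t\,L^{2}\|\stoich\|^{2}
  \int_{0}^{t} \bigl| X(s-)-\tilde{x}(s) \bigr|^{2} \, ds.
\end{align*}

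Next I would decompose $X(s-)-\tilde{x}(s) = X_{J}(s-)+\err(s)$ --- legitimate because $X_{D}$ is continuous and $X = X_{D}+X_{J}$ --- and use $(a+b)^{2} \le 2a^{2}+2b^{2}$. Taking expectations, noting $EX_{J}(s-)^{2} = EX_{J}(s)^{2}$ by Lemma~\ref{lem:jump}, and bounding the latter by $\|\stoich\|^{2}Ws$ via~\eqref{eq:localstd1} of Lemma~\ref{lem:localstd}, I obtain
\begin{align*}
  g(t) &\le 2tL^{2}\|\stoich\|^{2} \int_{0}^{t}
  \bigl[ \|\stoich\|^{2}Ws+g(s) \bigr] \, ds
  = L^{2}\|\stoich\|^{4}W t^{3}
  + 2tL^{2}\|\stoich\|^{2} \int_{0}^{t} g(s) \, ds.
\end{align*}

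Finally, to close the estimate I would fix an arbitrary horizon $T$ and observe that for $t \in [0,T]$ the factor $t$ in front of the integral may be replaced by $T$, so that $g(t) \le L^{2}\|\stoich\|^{4}W T^{3} + 2TL^{2}\|\stoich\|^{2}\int_{0}^{t} g(s)\,ds$. Since $g$ is finite on $[0,T]$ (the state stays in the bounded set $S$ and $\tilde{x}$ is continuous on compact intervals), the integral form of Gronwall's lemma then gives $g(T) \le L^{2}\|\stoich\|^{4}W T^{3}\exp(2L^{2}\|\stoich\|^{2}T^{2})$, and letting $T$ vary recovers~\eqref{eq:Cstab}.

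The only delicate point is exactly this last step: the stray factor $t$ outside the integral obstructs a direct appeal to the constant-coefficient Gronwall inequality, which forces one to freeze the endpoint before iterating. Everything else is a routine assembly of Cauchy--Schwarz, the Lipschitz hypothesis, and the martingale bound~\eqref{eq:localstd1} already established; a minor bookkeeping issue is the $X(s-)$ versus $X(s)$ distinction inside the expectations, which is harmless by Lemma~\ref{lem:jump}.
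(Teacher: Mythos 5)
Your argument is correct and follows the paper's proof essentially step for step: Cauchy--Schwarz/Jensen in time plus the Lipschitz bound, the split $X(s)-\tilde{x}(s)=X_{J}(s)+[X_{D}(s)-\tilde{x}(s)]$ with $(a+b)^{2}\le 2a^{2}+2b^{2}$, the bound~\eqref{eq:localstd1} from Lemma~\ref{lem:localstd}, and finally Gr\"onwall. The only difference is that you spell out the endpoint-freezing needed to handle the factor $t$ outside the integral, which the paper leaves implicit in its appeal to Gr\"onwall's inequality.
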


\begin{proof}
By Jensen's inequality and the Lipschitz boundedness we get
\begin{align*}
  &\phantom{\le} E[ X_{D}(t)-\tilde{x}(t) ]^{2} \le
  L^{2}\|\stoich\|^{2} t E \int_{0}^{t} [X(s)-\tilde{x}(s)]^{2} \, ds   \\
  &\le L^{2}\|\stoich\|^{2} t \int_{0}^{t}
  2EX_{J}(s)^{2}+2E[X_{D}(s)-\tilde{x}(s)]^{2} \, ds                    \\
  &\le L^{2}\|\stoich\|^{4} Wt^{3}+
  2L^{2}\|\stoich\|^{2}t \int_{0}^{t}
  E[X_{D}(s)-\tilde{x}(s)]^{2} \, ds,
\end{align*}
where the first part of Lemma~\ref{lem:localstd} was used. The result
now follows as an application of Grönwall's inequality.
\end{proof}

% *** also valid version, but not used:
%\begin{lemma}
%  \label{lem:Cstab}
%  Let the drift term be defined as in~\eqref{eq:SDEsplit} with
%  $X_{D}(0) = X(0)$ and let $\tilde{x}(t)$ be the reaction rate
%  approximation~\eqref{eq:rate} to the expected value of $X(t)$ with
%  $\tilde{x}(0) \equiv X(0)$. Then
%\begin{align}
%  \label{eq:Cstab}
%  E|X_{D}(\dt)-\tilde{x}(\dt)| &\le
%  L\|\stoich\|^{2}W^{1/2} \dt^{3/2} \exp (L\|\stoich\|\dt).
%\end{align}
%\end{lemma}
%
%\begin{proof}
%By the Lipschitz boundedness we get
%\begin{align*}
%  &E|X_{D}(\dt)-\tilde{x}(\dt)| \le L\|\stoich\| \int_{0}^{\dt}
%  E|X(t)-\tilde{x}(t)| \, dt                                            \\
%  &\le L\|\stoich\| \int_{0}^{\dt}
%  E|X_{J}(t)|+E|X_{D}(t)-\tilde{x}(t)| \, dt                            \\
%  &\le L\|\stoich\|^{2}W^{1/2}\dt^{3/2}+L\|\stoich\|\int_{0}^{\dt}
%  E|X_{D}(t)-\tilde{x}(t)| \, dt,
%\end{align*}
%where the first part of Lemma~\ref{lem:localstd} was used. The result
%now follows as an application of Grönwall's inequality.
%\end{proof}

\begin{remark}
  In a closed system we can identify the magnitude of $X$ with the
  system volume $\Omega$; $\max_{x \in S} \|x\| = \Ordo{\Omega}$. For
  density dependent propensities (see~\eqref{eq:densitydependent}),
  this implies that $W \sim \Omega$ while $L$ does not scale with
  $\Omega$. Lemma~\ref{lem:localstd} and~\ref{lem:Cstab} then yields
\begin{align}
  \label{eq:rrconvergence}
  \left( E[X(t)-\tilde{x}(t)]^{2} \right)^{1/2} &\le C_{t} \Omega^{1/2}
\end{align}
  for some bounded constant $C_{t}$. By Chebyshev's inequality we thus
  have convergence in probability $\Omega^{-1}X(t) \to
  \Omega^{-1}\tilde{x}(t)$ as $\Omega \to \infty$. For more precise
  reasoning of this type, see \cite{Markovappr}.
\end{remark}

We proceed with a result that is crucial to the convergence properties
of the parareal algorithm.

\begin{theorem}
  \label{th:CMCstab}
  Let $X(t \ge 0)$ and $Y(t \ge 0)$ evolve according to~\eqref{eq:SDE}
  using the same underlying set of Poisson processes but with
  different initial values $X_{0}$ and $Y_{0}$. Define the jump terms
  $X_{J}(t)$ and $Y_{J}(t)$ as in Lemma~\ref{lem:localstd}. Then
\begin{align}
  \label{eq:CMCstab}
  E \left[ Y_{J}(\dt)-X_{J}(\dt) \right]^{2} &\le
  L\|\stoich\|^{2} \dt |Y_{0}-X_{0}|(1+\Ordo{\dt}).
\end{align}
\end{theorem}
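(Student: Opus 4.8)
The plan is to imitate the proof of Lemma~\ref{lem:localstd}, but to exploit the fact that the $\hatw_{r}$ are indicator functions so as to reach a bound that is \emph{linear}, not quadratic, in the initial discrepancy. Write $Z_{t} = Y_{t}-X_{t}$ and $Z_{J}(t) = Y_{J}(t)-X_{J}(t)$. Since $X$ and $Y$ are driven by the \emph{same} Poisson random measure $\mu$ and both jump terms start at $0$, the split~\eqref{eq:SDEsplit} gives
\begin{align*}
  Z_{J}(\dt) &= -\sum_{r=1}^{R} \stoich_{r} \int_{0}^{\dt}
  \int_{0}^{\maxW} \left[ \hatw_{r}(Y(s-); \, z)-\hatw_{r}(X(s-); \,
  z) \right] (\mu-m)(ds \times dz),
\end{align*}
and the It\^{o} isometry for jump processes together with the Cauchy--Schwarz bound $\bigl( \sum_{r} \stoich_{r} a_{r} \bigr)^{2} \le \|\stoich\|^{2} \sum_{r} a_{r}^{2}$ (exactly as in Lemma~\ref{lem:localstd}) yields
\begin{align*}
  E Z_{J}(\dt)^{2} &\le \|\stoich\|^{2} E \int_{0}^{\dt}
  \int_{0}^{\maxW} \sum_{r=1}^{R} \left[ \hatw_{r}(Y(s-); \, z)-
  \hatw_{r}(X(s-); \, z) \right]^{2} m(ds \times dz).
\end{align*}

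Now comes the point of the construction. Because $\hatw_{r} \in \{0,1\}$ we have $[\hatw_{r}(Y(s-);z)-\hatw_{r}(X(s-);z)]^{2} = |\hatw_{r}(Y(s-);z)-\hatw_{r}(X(s-);z)|$, which in $z$ is the indicator of the set where exactly one of the two copies of reaction~$r$ is active; by the definition of $\hatw_{r}$ this set is an interval of Lebesgue measure $|w_{r}(Y(s-))-w_{r}(X(s-))|$. Integrating in $z$, using the Lipschitz bound of Assumption~\ref{ass:bounded}, and then Lemma~\ref{lem:jump} to pass from $Z(s-)$ to $Z(s)$ under the time integral,
\begin{align*}
  E Z_{J}(\dt)^{2} &\le \|\stoich\|^{2} \sum_{r=1}^{R} L_{r}
  \int_{0}^{\dt} E|Z(s)| \, ds = \|\stoich\|^{2} L \int_{0}^{\dt}
  E|Z(s)| \, ds.
\end{align*}
It therefore remains to prove $E|Z(s)| \le |Z_{0}| e^{cs}$ for some fixed constant $c = \Ordo{1}$, for then $\int_{0}^{\dt} E|Z(s)| \, ds = |Z_{0}| \dt (1+\Ordo{\dt})$ and~\eqref{eq:CMCstab} follows on recalling $Z_{0} = Y_{0}-X_{0}$.

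For this last estimate I would \emph{not} split $Z$ into drift and martingale parts: the $L^{1}$-norm of the martingale part is not zero and is only controllable by its $L^{2}$-norm, which is the very quantity under study and which produces an $\Ordo{\sqrt{\dt}}$ rather than an $\Ordo{\dt}$ correction. Instead I would argue pathwise. The process $|Z_{t}|$ is of finite activity, and at each jump time $|Z_{\tau}|-|Z_{\tau-}| \le |\Delta Z_{\tau}|$; moreover $\Delta Z_{\tau} \ne 0$ only when the mark of $\mu$ at $\tau$ lies in one of the ``disagreement intervals'' above, in which case exactly one copy fires a single reaction~$r$ and $|\Delta Z_{\tau}| = |\stoich_{r}|$. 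Summing the jumps and replacing $\mu$ by its predictable compensator $m$,
\begin{align*}
  E |Z_{t}| &\le |Z_{0}|+E \int_{0}^{t} \int_{0}^{\maxW}
  \sum_{r=1}^{R} |\stoich_{r}| \left| \hatw_{r}(Y(s-); \, z)-
  \hatw_{r}(X(s-); \, z) \right| m(ds \times dz) \le |Z_{0}|+c
  \int_{0}^{t} E|Z(s)| \, ds,
\end{align*}
with $c = \|\stoich\| L \ge \sum_{r} |\stoich_{r}| L_{r}$, and Gr\"onwall's inequality then gives the claimed exponential bound. The main obstacle is precisely the middle step: recognising that $\hatw_{r}^{2} = \hatw_{r}$ converts the $z$-integral into the length of a symmetric-difference interval, namely exactly $|w_{r}(Y)-w_{r}(X)|$. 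This is where the sharper thinning onto $[0,\maxW]$ rather than onto $[0,W]$ is used (cf.\ the remark following~\eqref{eq:countrepr}); with the coarser thinning the comparison would not decouple channel by channel. The remaining care is simply to bound $E|Z(s)|$ by the pathwise jump-sum rather than through the $L^{2}$ martingale estimate.
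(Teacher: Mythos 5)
Your proof is correct, and its first half coincides with the paper's: It\^{o} isometry, the observation that $\hatw_{r}^{2}=\hatw_{r}$ turns the $z$-integral into the length $|w_{r}(Y(t-))-w_{r}(X(t-))|$ of the disagreement interval (exactly the point of the thinning onto $[0,\maxW]$), the Lipschitz assumption and Lemma~\ref{lem:jump}, arriving at $L\|\stoich\|^{2}\int_{0}^{\dt}E|Y(t)-X(t)|\,dt$. Where you diverge is in bounding this last integral. The paper uses the triangle inequality $E|Y(t)-X(t)|\le|Y_{0}-X_{0}|+E|Y(t)-Y_{0}|+E|X(t)-X_{0}|$ and then controls the two increments via the integer inequality~\eqref{eq:intineq} together with the second part of Lemma~\ref{lem:localstd}, which produces an additive remainder $4L\|\stoich\|^{4}\int_{0}^{\dt}(Wt+W^{2}t^{2})\,dt=\Ordo{\dt^{2}}$ whose constant involves $W$ (and hence the system size) and which is absorbed into the $\Ordo{\dt}$ factor using that $|Y_{0}-X_{0}|\ge 1$ for integer initial data. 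You instead prove $E|Y(t)-X(t)|\le|Y_{0}-X_{0}|e^{ct}$ by a pathwise jump-count argument followed by the compensator formula and Gr\"onwall; this is sound (the same-slot structure of the thinning does guarantee that a mark can only make the two copies disagree within a single channel $r$, with $|\Delta(Y-X)|=|\stoich_{r}|$), it avoids both the integer inequality and Lemma~\ref{lem:localstd}, and it yields a remainder that is multiplicative in $|Y_{0}-X_{0}|$ with a constant $c=\|\stoich\|L$ independent of $W$, which is in fact slightly sharper than the paper's bound. The trade-off is that the paper's route is shorter given that Lemma~\ref{lem:localstd} is already established, whereas yours requires the extra (but correctly justified) pathwise estimate; your remark that a drift/martingale split of $Y-X$ would only give an $\Ordo{\sqrt{\dt}}$ correction is a fair motivation, though the paper shows the integer-valuedness of the state offers an alternative escape.
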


\begin{proof}
By the It\^{o} isometry we can bound the left side
of~\eqref{eq:CMCstab} by
\begin{align*}
  &\phantom{\le}E \int_{0}^{\dt} \int_{0}^{\maxW} \|\stoich\|^{2}
  \sum_{r=1}^{R} \left[ \hatw_{r}(Y(t-); \, z)-
  \hatw_{r}(X(t-); \, z) \right]^{2} m(dt \times dz)                    \\
  &\le E \, \|\stoich\|^{2} \int_{0}^{\dt} \sum_{r=1}^{R}
  |w_{r}(Y(t-))-w_{r}(X(t-))| \, dt,
\end{align*}
where the usefulness of the thinning proposed in
Section~\ref{subsec:samplepath} is evident. From the Lipschitz
assumption and Lemma~\ref{lem:jump} we get the bound
\begin{align*}
  &\le L\|\stoich\|^{2} \int_{0}^{\dt}
  E \left| Y(t-)-X(t-) \right| \, dt
  = L\|\stoich\|^{2} \int_{0}^{\dt}
  E \left| Y(t)-X(t) \right| \, dt                                      \\
  &\le L\|\stoich\|^{2} \dt \left| Y_{0}-X_{0} \right|+
  L\|\stoich\|^{2} \int_{0}^{\dt} E \left| Y(t)-Y_{0} \right|+
  E\left| X(t)-X_{0} \right| \, dt                                      \\
  &\le L\|\stoich\|^{2} \dt |Y_{0}-X_{0}|+
  4L \|\stoich\|^{4} \int_{0}^{\dt}  Wt+W^{2}t^{2} \, dt,
\end{align*}
where the integer inequality~\eqref{eq:intineq} and the second part of
Lemma~\ref{lem:localstd} was used in the last line.
\end{proof}

The following result, similar in spirit to Theorem~\ref{th:CMCstab},
is useful for studying the error in the first moment alone.
\begin{theorem}
  \label{th:Ccons}
  Define $Y_{D}(t)$, $\tilde{y}(t)$, $X_{D}(t)$ and $\tilde{x}(t)$ as
  in Lemma~\ref{lem:Cstab}. Then
\begin{align}
  \label{eq:Ccons}
  |EY_{D}(\dt)-\tilde{y}(\dt)-EX_{D}(\dt)+\tilde{x}(\dt)| &\le
  \Ordo{\dt^{3/2}}|E[Y_{0}-X_{0}]|.
\end{align}
\end{theorem}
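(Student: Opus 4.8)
The plan is to reduce the estimate to a linear Volterra inequality for the mean discrepancy $\Phi(t):=E[Y(t)-X(t)]-(\tilde y(t)-\tilde x(t))$ and then close it with Gr\"onwall. Since $X_J,Y_J$ are mean-zero martingales, $EX_D(t)=EX(t)$ and $EY_D(t)=EY(t)$; and since $X(t-)=X(t)$ a.s.\ for every fixed $t$ by Lemma~\ref{lem:jump}, the left-hand side of~\eqref{eq:Ccons} is exactly $|\Phi(\dt)|$. Subtracting the integrated form of the expectation of~\eqref{eq:SDE} from that of the rate equation~\eqref{eq:rate} --- the two copies of $E[Y_0-X_0]$ cancelling --- gives
\begin{align*}
  \Phi(\dt) &= -\int_0^{\dt}\sum_{r=1}^{R}\stoich_r
  \bigl(E[w_r(Y(s))-w_r(X(s))]-[w_r(\tilde y(s))-w_r(\tilde x(s))]\bigr)\,ds.
\end{align*}

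Next I would expand each propensity difference to first order about the coupled reference values $\tilde x(s)$, $\tilde y(s)$, so that the integrand splits into a \emph{feedback} part, bounded in absolute value by $C\,|\Phi(s)|$ through the Lipschitz continuity in Assumption~\ref{ass:bounded}, and a \emph{source} part. The source collects three contributions: the first-moment consistency of the drift with the rate equation, which by Lemma~\ref{lem:Cstab} together with~\eqref{eq:CSineq} is $\Ordo{s^{3/2}}$; the deviation between the two coupled trajectories, which through Theorem~\ref{th:CMCstab} and the second part of Lemma~\ref{lem:localstd} is what carries the factor $|E[Y_0-X_0]|$; and a variance/Jensen correction from the quadratic remainder of the expansion, for which the sharper thinning of Section~\ref{subsec:samplepath} (giving $\hatw_r^2=\hatw_r$ and $\hatw_r\hatw_{r'}=0$ for $r\neq r'$) makes the relevant quadratic-variation terms for $X$ and $Y$ differ by only $\Ordo{s}\,|E[Y_0-X_0]|$. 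Integrating the source over $[0,\dt]$ bounds it by $\Ordo{\dt^{3/2}}|E[Y_0-X_0]|$, whence $|\Phi(\dt)|\le\Ordo{\dt^{3/2}}|E[Y_0-X_0]|+C\int_0^{\dt}|\Phi(s)|\,ds$; Gr\"onwall's inequality then only multiplies the bound by $\exp(\Ordo{\dt})=1+\Ordo{\dt}$ and gives~\eqref{eq:Ccons}.

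The hard part will be the source term --- specifically, keeping its dependence on the data \emph{linear} in $|E[Y_0-X_0]|$ rather than in $|E[Y_0-X_0]|^{1/2}$. The martingale $Y_J(s)-X_J(s)$ has quadratic variation only of order $s\,|Y_0-X_0|$ (Theorem~\ref{th:CMCstab}), so a na\"ive Cauchy--Schwarz pairing of it against an $\Ordo{1}$ factor would reintroduce a square root. I would avoid this by using the mean-zero property before estimating --- subtracting a suitable constant so that the partner factor contributes its own power of $s$ --- and by using that $X$, $Y$ are integer valued (cf.~\eqref{eq:intineq}), so that $|Y_0-X_0|\ge 1$ whenever it is nonzero and any leftover $\Ordo{s^{3/2}}|Y_0-X_0|^{1/2}$-type term is absorbed. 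For integer-valued data a shorter route is in fact available: bound $|EY_D(\dt)-\tilde y(\dt)|$ and $|EX_D(\dt)-\tilde x(\dt)|$ separately by $\Ordo{\dt^{3/2}}$ via Lemma~\ref{lem:Cstab} and~\eqref{eq:CSineq}, and note that $\Ordo{\dt^{3/2}}\le\Ordo{\dt^{3/2}}|E[Y_0-X_0]|$ on the event $\{Y_0\neq X_0\}$ while both sides vanish on $\{Y_0=X_0\}$.
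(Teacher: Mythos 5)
Your closing ``shorter route'' is exactly the paper's proof: dispose of the case $Y_{0}=X_{0}$ (coupled trajectories coincide), use integrality to get $|Y_{0}-X_{0}|\ge 1$ otherwise, apply Lemma~\ref{lem:Cstab} twice together with~\eqref{eq:CSineq} to bound each of $|EY_{D}(\dt)-\tilde{y}(\dt)|$ and $|EX_{D}(\dt)-\tilde{x}(\dt)|$ by $\Ordo{\dt^{3/2}}$, and then absorb the factor $|E[Y_{0}-X_{0}]|\ge 1$. That argument is complete on its own, so the theorem is proved. The long Volterra--Gr\"onwall route you present first is neither needed nor easy to make rigorous under the paper's hypotheses: Assumption~\ref{ass:bounded} gives only Lipschitz continuity, so the ``first-order expansion with quadratic remainder'' of $w_{r}$ about $(\tilde x,\tilde y)$ and the bound of the feedback term by $C|\Phi(s)|$ (a difference of differences controlled by a difference of means) do not follow; likewise the claim that the quadratic-variation corrections for $X$ and $Y$ differ by $\Ordo{s}\,|E[Y_{0}-X_{0}]|$ is asserted rather than derived. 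The instructive point, which both your fallback and the paper exploit, is that no genuine coupling estimate (Theorem~\ref{th:CMCstab}) is required here at all: the factor $|E[Y_{0}-X_{0}]|$ is obtained for free from integer-valuedness, at the price of the bound being trivial rather than sharp in that factor.
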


\begin{proof}
If $Y_{0} = X_{0}$ there is nothing to prove and we may thus assume
that $|Y_{0}-X_{0}| \ge 1$. Two applications of Lemma~\ref{lem:Cstab}
gives that
\begin{align*}
  Y_{D}(\dt)-\tilde{y}(\dt)-X_{D}(\dt)+\tilde{x}(\dt) &= \Ordo{\dt^{3/2}}.
\end{align*}
We recover \eqref{eq:Ccons} by inserting the factor $(Y_{0}-X_{0})$,
taking expectation and absolute values of both sides.
\end{proof}

\subsection{Convergence of the parareal algorithm}
\label{subsec:convergence}

For the application of the parareal algorithm we let $\Fine y$ be the
stochastic evolution in a time-step $\dt$ of the jump process $X(t)$
obeying~\eqref{eq:SDE} with initial data
$y$. Following~\eqref{eq:SDEsplit}, we write $\Fine =
\Fine_{D}+\Fine_{J}$ for the drift- and jump term respectively. The
coarse solver $\Coarse y$ is instead the evolution in a time-step
$\dt$ using the deterministic rate equations~\eqref{eq:rate} starting
from $y$.

We denote by $\xsol = [\xsol_{0},\ldots,\xsol_{N}]$ with $\xsol_{n} =
\xsol(t_{n})$ the exact solution at times $t_{n} \equiv n \dt$.
$\nsol_{k,n} \approx \xsol_{n}$ is the numerical approximative
solution obtained after the $k$th iteration of the parareal algorithm:
\begin{align}
  \nsol_{k,0} &= \xsol_{0}, \quad k \ge 0,                              \\
  \nsol_{0,n} &= \Coarse \nsol_{0,n-1}, \quad n \ge 1,                  \\
  \label{eq:parareal_setup}
  \nsol_{k,n} &= \Fine \nsol_{k-1,n-1}-\Coarse \nsol_{k-1,n-1}+
  \Coarse \nsol_{k,n-1},
\end{align}
where $(k,n) \ge 1$ in~\eqref{eq:parareal_setup}.

We now wish to analyze the root-mean-square (RMS) error defined by
\begin{align}
  \err_{k,n}^{2} &= E [\nsol_{k,n}-\xsol_{n}]^{2}.                      \\
\intertext{This measure satisfies by~\eqref{eq:parareal_setup} the 
  recursion}
  \nonumber
  \err_{k,n}^{2} &= E [T_{1}+T_{2}+T_{3}]^{2}                           \\
  \label{eq:errsplit}
  &= E \left[ T_{1}^{2}+T_{2}^{2}+T_{3}^{2}+
  2T_{1}T_{2}+2T_{1}T_{3}+2T_{2}T_{3} \right],                          \\
\intertext{where in terms of}
  T_{1} &\equiv \Fine_{J} \nsol_{k-1,n-1}-\Fine_{J} \xsol_{n-1},        \\
  T_{2} &\equiv \Fine_{D} \nsol_{k-1,n-1}-\Coarse \nsol_{k-1,n-1}-
  \Fine_{D} \xsol_{n-1}+\Coarse \xsol_{n-1},                            \\
  T_{3} &\equiv \Coarse \nsol_{k,n-1}-\Coarse \xsol_{n-1}.
\end{align}
By Theorem~\ref{th:CMCstab} using~\eqref{eq:intineq} and
Lemma~\ref{lem:Cstab} we get
\begin{align}
  \label{eq:T_1}
  ET_{1}^{2} &\le L\|\stoich\|^{2} \dt
  \err_{k-1,n-1}^{2}(1+\Ordo{\dt}),                                     \\
  ET_{2}^{2} &\le \Ordo{\dt^{3}}.                                       \\
\intertext{We also have the Lipschitz bound}
  ET_{3}^{2} &\le \exp(2L\|\stoich\|\dt) \err_{k,n-1}^{2}.
\end{align}
As for the cross-terms we have that $ET_{1}T_{3} = 0$ since $T_{1}$ is
a martingale of zero mean and $T_{3}$ is a deterministic
(non-anticipating) function. The remaining terms can be easily
estimated as
\begin{align}
  2ET_{1}T_{2} &\le \dt ET_{1}^{2}+\dt^{-1} ET_{2}^{2}
  \le \Ordo{\dt^{2}} \err_{k,n-1}^{2},                                  \\
  2ET_{2}T_{3} &\le \dt^{-1} ET_{2}^{2}+\dt ET_{3}^{2}
  \le \Ordo{\dt^{2}}+\Ordo{\dt} \err_{k,n-1}^{2}.
\end{align}
Summarizing, we have from~\eqref{eq:errsplit} upon ignoring higher
order terms in $\dt$,
\begin{align}
  \nonumber
  \err_{k,n}^{2} &\le L\|\stoich\|^{2} \dt \err_{k-1,n-1}^{2}+
  \exp(2L\|\stoich\|\dt) \err_{k,n-1}^{2}                               \\
  \label{eq:err1}
  &=: \dt\StabF^{2} \err_{k-1,n-1}^{2}+\exp(2\dt\StabC) \err_{k,n-1}^{2}.
\end{align}
Define $M = \max_{n} \err_{0,n}$ in view
of~\eqref{eq:rrconvergence}. We readily recognize the binomial
recurrence in~\eqref{eq:err1} so that
\begin{align}
  \label{eq:errbound}
  \err_{k,n}^{2} &\le M^{2} \binom{n}{k} \dt^{k} \StabF^{2k}
  \exp(2\dt \StabC (n-k)).
\end{align}
If we now look at iteration $k = \Ordo{1}$ and interval $n = N =
T/\dt$,~\eqref{eq:errbound} can be simplified into
\begin{align}
  \label{eq:convergence1}
  \err_{k,n} &\le C_{1,T} \StabF^{k},
\end{align}
where $C_{1,T}$ is a bounded constant for any given total time
$T$. Eq.~\eqref{eq:convergence1} shows mean square convergence for
contractive problems where $\StabF < 1$ only; it is unclear what
happens for systems with larger Lipschitz constants. A key to
understanding how the analysis can be refined lies in the fact that
the integer inequality~\eqref{eq:intineq} had to be applied before
using Theorem~\ref{th:CMCstab} and arriving at~\eqref{eq:T_1}. For an
initial large error, the inequality~\eqref{eq:CSineq} is sharper but
introduces a nonlinear dependence:
\begin{align}
  \label{eq:err2}
  \err_{k,n}^{2} &\le \dt\StabF^{2} \err_{k-1,n-1}+
  \exp(2\dt\StabC) \err_{k,n-1}^{2}.
\end{align}
This motivates our interest in the next proposition.

\begin{proposition}
  \label{prop:nlinrec}
  Consider for $(k,n) \ge 0$ a sequence $a_{k,n}$ of nonnegative
  numbers. Let $a_{k,0} = 0$ for $k \ge 0$ and suppose that $a_{0,n}
  \le C$ for $n > 0$ and some nonnegative constant $C$. Then for $k
  \le n$ the inequality
\begin{align}
  \nonumber
  a_{k,n} &\le A a_{k-1,n-1}^{1/2}+B a_{k,n-1}                          \\
\intertext{where $A \ge 0$ and $B \ge 1$ is satisfied by}
  \label{eq:nlinbound}
  a_{k,n} &\le A^{2^{2-2^{1-k}}}B^{n-k} (n-k)^{2^{2-2^{1-k}}} C^{2^{-k}}.
\end{align}
\end{proposition}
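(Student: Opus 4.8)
The plan is to prove~\eqref{eq:nlinbound} by induction on $k$, after two preliminary reductions. \emph{First}, a short double induction (on $k$, then on $n$) shows that $a_{k,n}=0$ whenever $0\le n\le k$: we have $a_{k,0}=0$ by hypothesis, and if $a_{k-1,m}=0$ for $m\le k-1$ and $a_{k,m}=0$ for some $m<k$, then $a_{k,m+1}\le A a_{k-1,m}^{1/2}+B a_{k,m}=0$. So it suffices to prove~\eqref{eq:nlinbound} for $n>k$; at $n=k$ the factor $(n-k)^{2^{2-2^{1-k}}}$ kills the right-hand side as well, and for $n<k$ this vanishing is what prevents a fractional power of a negative number from ever being written down. \emph{Second}, for fixed $k$ the inequality is linear in the $n$-index, so unrolling it from $a_{k,0}=0$ gives
\[
  a_{k,n} \;\le\; A \sum_{m=0}^{n-1} B^{\,n-1-m}\, a_{k-1,m}^{1/2},
\]
and the summands with $m\le k-1$ drop out by the first reduction. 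It helps to abbreviate $\beta_k:=2^{2-2^{1-k}}$; the facts that make everything fit together are $\beta_0=1$, $\beta_k\uparrow 4$, the self-similarity $\beta_k=2\sqrt{\beta_{k-1}}$, and the halving of the $C$-exponent, $2^{-k}=2^{-(k-1)}/2$.

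I would run the induction under the assumption $A\ge 1$. This is the regime in which the proposition is applied (large Lipschitz constants, so that $A=\dt\StabF^{2}$ is not small), and it is in fact needed: for $A<1$ the bound already fails at $k=0$, and should then be read with $A$ replaced by $\max(A,1)$. Granting $A\ge 1$, the base case $k=0$ is immediate, since for $n\ge 1$ one has $a_{0,n}\le C\le A B^{n}\,n\,C=A^{\beta_0}B^{n-0}(n-0)^{\beta_0}C^{2^{0}}$ because $A,B,n\ge 1$.

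For the inductive step I would substitute the bound for $k-1$ into the unrolled sum. For $m\ge k-1$ the inductive hypothesis reads $a_{k-1,m}\le A^{\beta_{k-1}}B^{\,m-k+1}(m-k+1)^{\beta_{k-1}}C^{2^{-(k-1)}}$; taking square roots, using $B\ge 1$ to bound $B^{(m-k+1)/2}\le B^{m-k+1}$ so that the $B$-powers collapse to $B^{\,n-k}$, and then $\sum_{i=1}^{n-k}i^{\beta_{k-1}/2}\le(n-k)^{\,1+\beta_{k-1}/2}$, I arrive at
\[
  a_{k,n} \;\le\; A^{\,1+\beta_{k-1}/2}\, B^{\,n-k}\, (n-k)^{\,1+\beta_{k-1}/2}\, C^{2^{-k}}.
\]
Since $A\ge 1$ and $n-k\ge 1$, the exponent $1+\beta_{k-1}/2$ may be raised to $\beta_k=2\sqrt{\beta_{k-1}}$ provided $1+\beta_{k-1}/2\le 2\sqrt{\beta_{k-1}}$; squaring, this is the quadratic inequality $\beta_{k-1}^{2}-12\,\beta_{k-1}+4\le 0$, valid on $[\,6-4\sqrt2,\,6+4\sqrt2\,]\supset[1,4]\ni\beta_{k-1}$. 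This gives~\eqref{eq:nlinbound} and closes the induction.

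I expect the only real obstacle to be organizational: guessing the correct shape of the ansatz, namely $A^{\beta_k}B^{n-k}(n-k)^{\beta_k}C^{2^{-k}}$ with $\beta_k=2\sqrt{\beta_{k-1}}$, and then verifying that the exponent $1+\beta_{k-1}/2$ actually produced by one step of the recursion is dominated by $\beta_k$ over the relevant range $\beta_{k-1}\in[1,4)$ --- which is the little quadratic inequality above. The bookkeeping with the $C$-exponents and the vanishing of $a_{k,n}$ for $n\le k$ is routine, and the need for $A\ge 1$ is a small gap in the statement that I would flag explicitly.
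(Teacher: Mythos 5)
The paper supplies no written proof of Proposition~\ref{prop:nlinrec} beyond the remark that an induction verifies the ansatz~\eqref{eq:nlinbound}, and your argument is exactly that induction, carried out correctly: the unrolling in $n$ from $a_{k,0}=0$, the vanishing $a_{k,m}=0$ for $m\le k$ (which implicitly reads the recursion as holding for all $(k,n)\ge 1$, as it indeed does when it arises from~\eqref{eq:err2}), the collapse of the $B$-powers to $B^{n-k}$, the sum estimate $\sum_{i=1}^{n-k} i^{\beta_{k-1}/2}\le (n-k)^{1+\beta_{k-1}/2}$, and the exponent comparison $1+\beta_{k-1}/2\le 2\sqrt{\beta_{k-1}}=\beta_{k}$ via the quadratic $\beta^{2}-12\beta+4\le 0$ on $[1,4)$ all check out. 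Your caveat about $A\ge 1$ is also well taken: with only $A\ge 0$ the stated bound can genuinely fail, already at $k=0$, or at $(k,n)=(1,2)$ with $B=1$ and $a_{0,1}=C$, where the recursion gives $a_{1,2}=AC^{1/2}$ while \eqref{eq:nlinbound} claims $A^{2}C^{1/2}$; so the proposition should be read with $\max(A,1)$ in place of $A$ (or with the recursion's $A$ assumed $\ge 1$). This correction is immaterial in the regime the proposition is invoked for --- large $\StabF$, where $A=\dt\StabF^{2}$ enters the constant $C_{2,T}$ of~\eqref{eq:convergence2} only through the bounded product $A(n-k)\le T\StabF^{2}$ --- but it is a fair point to flag against the literal statement.
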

A proof by induction is easily constructed once the functional form
in~\eqref{eq:nlinbound} has been obtained.

To apply Proposition~\ref{prop:nlinrec} we put $A = \dt\StabF^{2}$, $B
= \exp(2\dt\StabC)$ and identify $\err_{k,n} = a_{k,n}^{1/2}$ so that
$C = M^{2}$. The result is for $k \ll n = N$ that
\begin{align}
  \label{eq:convergence2}
  \err_{k,n} &\le C_{2,T} M^{2^{-k}},
\end{align}
for some constant $C_{2,T}$. During the first few iterations, this
bound is sharper than~\eqref{eq:convergence1} which is only valid when
the error is already small. For instance, if as
in~\eqref{eq:rrconvergence} we have that $M \sim \Omega^{1/2}$, then
$\err_{k,N} \sim [\Omega^{1/2},\Omega^{1/4},\Omega^{1/8},\ldots]$ for
the first few iterations $k = [0,1,2,\ldots]$.

As it provides us with additional insight, we also comment on the
\emph{error in the first moment}. Define for this purpose $\merr_{k,n}
= |E[\nsol_{k,n}-\xsol_{n}]|$. Proceeding as before and using
Theorem~\ref{th:Ccons} it is not difficult to see that this error
satisfies
\begin{align}
  \nonumber
  \merr_{k,n} &\le \Ordo{\dt^{3/2}}\merr_{k-1,n-1}+
  \exp(L\|\stoich\|\dt)\merr_{k,n-1}                                    \\
  \label{eq:wconvergence}
  &\lesssim \binom{n}{k} \dt^{3k/2} \le C_{3,T} \dt^{k/2}.
\end{align}

The estimates~\eqref{eq:convergence1}, \eqref{eq:convergence2}
and~\eqref{eq:wconvergence} show that \emph{for $\dt$ sufficiently
small, the contractive parts converge in mean square while the
non-contractive parts converge in the sense of~\eqref{eq:wconvergence}
only}. This interpretation opens up for modifying the scheme in such a
way that the fast scales are actively filtered away.

\subsection{Homogenization}
\label{subsec:homogenization}

The previous analysis suggests that problems with large Lipschitz
constants may be problematic to solve by the proposed parareal
algorithm. On the other hand, for certain problems involving very
rapid scales, pathwise convergence may not be so interesting to
obtain. Rather are we interested in convergence to a
\emph{homogenized} model which by itself is a weak approximation to
the original system. For example, this situation occurs in the current
context when rapid reaction channels almost balance each other so that
the interesting dynamics occurs on a much slower scale.

Unlike the deterministic case where implicit solvers generally evolve
stiff problems efficiently, it has been proposed that model reduction
techniques are necessary in the stochastic setting
\cite{stiffSDEimpl}. For instance, the implicit tau-leap method has
only been shown to be weakly convergent under the rather strong
assumption of linear propensities \cite{stab_tau_leap, tau_leap_anal}.

Provided that the coarse solver is sufficiently dissipative, the
biggest challenge in directly using the current parareal algorithm for
stiff problems is to detect when the numerical solution is of
sufficient quality; since convergence is at best weak, any jumps on
the order of the noise associated with the fast scale could in
principle be accepted.

An easy way around this is to replace the fine propagator $\Fine$ with
a homogenized version $\Fineh$. A simple example is
\begin{align}
  \label{eq:Fineh}
  \Fineh X_{0} &:= \frac{1}{\delta t}
  \int_{\dt-\delta t}^{\dt} Y(t) \, dt, \quad
  \mbox{where } Y(t) = \Fine_{t} X_{0},
\end{align}
i.e.~an average of the exact trajectory over an interval $\delta
t$. This interval should be large enough to contain several fast
reactions but short enough to be essentially independent on the slow
scales. One can easily think of much more advanced filters to create a
suitable homogenization but we settled for the
immediate~\eqref{eq:Fineh}.

Let us remark that an essentially ``exact'' trajectory is available
within the homogenized solution since the non-averaged solution is
always computed first. This ``exact'' trajectory is just as noisy as
the true solution but allows for jumps related to the fast scale in
between successive time intervals $[t_{n},t_{n+1}]$. An intuitive
interpretation is that the homogenized trajectory is an exact sample
from a nearby model containing additional reactions that are scheduled
at deterministic time-steps $\dt$. In principle, the intensity of this
unknown process could be estimated and be put in relation to the rest
of the propensities.

%**************************************************************************

\section{Numerical examples}
\label{sec:ex}

After discussing the implementation of the proposed method, we will
consider three numerical examples. The first is a typical example of
when stochastic models are necessary since mean-field equations
generally give incorrect results. Nevertheless, when they are used as
a preconditioner in the parareal algorithm, convergence to the true
solution is quite fast. The second example is representative of
situations involving multiple scales with fast transients. Finally, we
obtain a trajectory from the \emph{reaction-diffusion} master equation
which is a good representative of very large networks. Here, the
macroscopic model is just the familiar reaction-diffusion partial
differential equation.

\subsection{Implementation}

For the purpose of performing experiments we have implemented a serial
version of the parareal algorithm in Matlab. The coarse solver is thus
simply the reaction rate equations solved by any suitable ODE-solver;
we typically used Matlab's \texttt{ode23s} but occasionally got away
with just a single step of the backward Euler method. As the fine
solver we used an implementation of the \emph{next reaction method}
(NRM) \cite{gillespiemodern}, which is statistically equivalent to the
SSA but specifically tuned to large networks. The reason for
preferring this particular implementation is that \emph{consistent
  Poisson processes may easily be sampled}. This feature is achieved
by simply using the same sequence of random numbers for each reaction
channel.

To appreciate where this technicality enters in the analysis, note
that in Theorem~\ref{th:CMCstab}, $Y_{t}$ and $X_{t}$ are assumed to
be trajectories conditioned on the same Poisson process. If they are
not related in this respect, the associated constants are much larger.

In order to estimate the quality of the numerical solution we used the
relative Euclidean norm of the preconditioned residual as given
by~\eqref{eq:pres};
\begin{align}
  \label{eq:presidual}
  \err_{k+1} &\sim \max_{n} D^{-1/2} \|
  (v_{k,n}-v_{k+1,n})/(1+v_{k+1,n}) \|,                                 \\
\intertext{(elementwise division). This was then taken as an 
  approximation to the true relative error}
  \label{eq:error}
  \err_{k+1} &= \max_{n} D^{-1/2} \|
  (v_{k+1,n}-u_{n})/(1+u_{n}) \|.
\end{align}

We generally did not round any fractional results obtained from the
coarse solver. Formally, this introduces a complication in the
analysis as the solution space becomes a continuum (see
\cite[Remark~3.1]{tau_li}) but we have not found any benefits in
forcing the solution to stay in $\Intdom_{+}^{D}$. However, for
fractional states, the propensities should explicitly be set to zero
whenever executing the reaction would result in a negative copy
number.

\subsection{Stochastic toggle switch}

A \emph{toggle switch} found within the regulatory network of
\textit{E.~coli} has been modeled by two mutually cooperatively
repressing gene products $X$ and $Y$~\cite{toggle_switch}. The model
is
\begin{align}
  \label{eq:rfun4}
  \left. \begin{array}{rclcrcl}
      \emptyset & \xrightarrow{a/(b+y^{2})} & X	& \quad &
      \emptyset & \xrightarrow{a/(b+x^{2})} & Y	                        \\
      X & \xrightarrow{\mu x} & \emptyset & \quad &
      Y & \xrightarrow{\mu y} & \emptyset
    \end{array} \right\},
\end{align}
with parameters $a = 3000$, $b = 11000$ and $\mu =
10^{-3}$. In~\eqref{eq:rfun4}, note that if the number of
$X$-molecules is larger than the number of $Y$-molecules, then the
production of $Y$-molecules is inhibited and the system finds a stable
state with $x > y$. However, the intrinsic noise can make the roles of
$X$ and $Y$ suddenly switch to a state where instead the production of
$X$-molecules is inhibited. The system~\eqref{eq:rfun4} thus
constitutes an interesting example where the deterministic dynamics
clearly differs from that obtained by a stochastic simulation.

In Figure~\ref{fig:rfun4_sol} the exact trajectory up to final time $T
= 5 \times 10^{6}$ is displayed together with the approximation
obtained after a few iterations by the parareal algorithm on a
\emph{very} coarse grid with $\dt = T/50 = 10^{5}$. Interestingly, the
next correct place to switch is found for each new iteration so that
all such events are correctly located within the first 4
iterations. Although the numerical solution sometimes overshoots,
\emph{some} information is evidently being correctly propagated
through the system.

\begin{figure}[htp]
  \centering
  \includegraphics[width = 14cm]{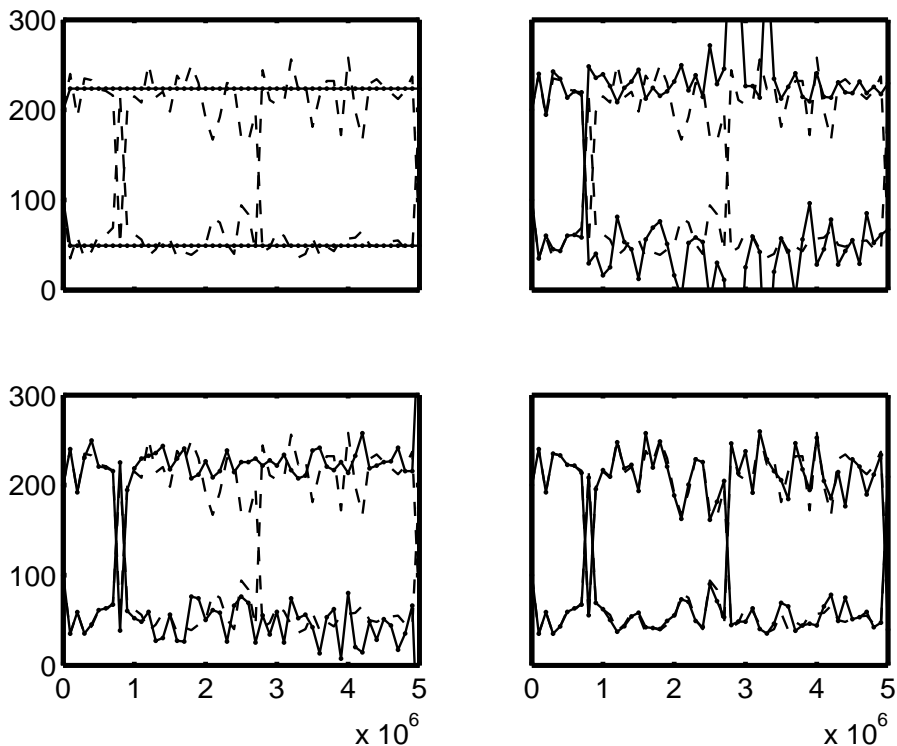}
  \caption{Dashed: exact trajectory of the two components of the
  toggle switch~\eqref{eq:rfun4} versus time. Solid: solution obtained
  in parallel after 0, 1, 2 and 4 iterations. Top left: the initial
  solution from the reaction rate equations immediately settles at one
  of the stable states. Top right and bottom left: the first and
  second switches are correctly obtained after respectively one and
  two iterations. Bottom right: all 4 spontaneous changes of state
  have been correctly detected. Note the rather large level of noise.}
  \label{fig:rfun4_sol}
\end{figure}

The relative errors and residuals are plotted in
Figure~\ref{fig:rfun4_err}. It is seen that the convergence initially
is very rapid but then reaches a plateau where the error stays within
a small fraction of the order of the noise level. The accuracy thus
obtained is quite reasonable and is at the order of a 1\% perturbation
of the propensities in~\eqref{eq:rfun4}.

\begin{figure}[htp]
  \centering
  \includegraphics[width = 10cm]{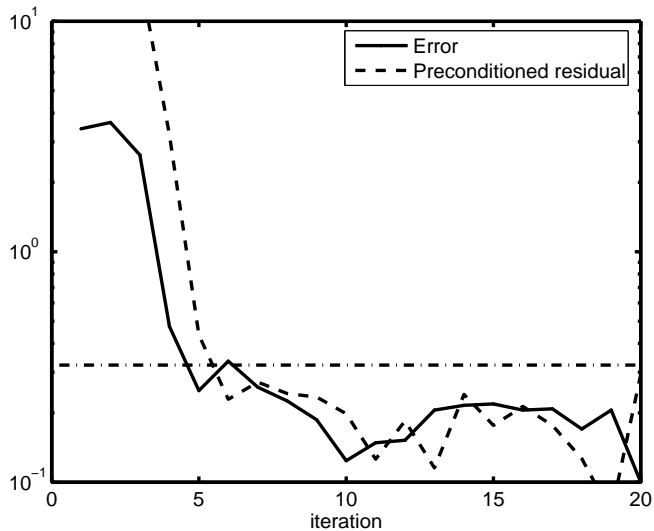}
  \caption{Maximum relative errors and residuals
  (see~\eqref{eq:presidual} and \eqref{eq:error}) obtained for the
  toggle switch during the first 20 iterations. The accuracy is
  quickly improved during the first 5 iterations and then settles more
  slowly. The horizontal line (dash-dot) is the induced difference
  when the propensities are perturbed by $\pm1\%$.}
  \label{fig:rfun4_err}
\end{figure}

\subsection{Homogenization of disparate rates}
\label{subsec:stiff}

As a specific model containing two different scales we consider
\emph{fast dimerization} combined with \emph{slow isomerization},
\begin{align}
  \label{eq:rfun8_5}
  \left. \begin{array}{rcl}
    X_{1}+X_{1} & \overset{1/\varepsilon}{{\rightleftharpoons}} & 
      X_{2}+X_{2}                                                       \\
    X_{2} & \overset{1}{{\rightleftharpoons}} & Y_{2}                   \\
    Y_{2}+Y_{2} & \overset{1/\varepsilon}{{\rightleftharpoons}} & 
      Y_{1}+Y_{1}
  \end{array} \right\},
\end{align}
where the small parameter $\varepsilon$ controls the difference in
scales. For this example we took $\varepsilon = 10^{-3}$ and initial
data $[x_{1},x_{2},y_{1},y_{2}](0) = [15,5,30,10]$ with final time $T
= 10$. The fast channels thus fire about $10^{4}$ times more often
than the slow ones and we note also that the fast channels have
quadratic propensities (e.g.~$w_{1} = x_{1}(x_{1}-1)/\varepsilon$) so
that the rate equations are inexact.

For the parareal discretization we used $\dt = T/50$ as before and an
extremely simple coarse solver in the form of a single step with the
linearized backward Euler method. The homogenization procedure
described in Section~\ref{subsec:homogenization} was used with the
fine solver defined in~\eqref{eq:Fineh} using $\delta t = \dt/2$. The
resulting combination was very effective indeed in obtaining a
homogenized solution, see Figure~\ref{fig:rfun8_5_sol}
and~\ref{fig:rfun8_5_err}.

\begin{figure}[htp]
  \centering
  \includegraphics[width = 14cm]{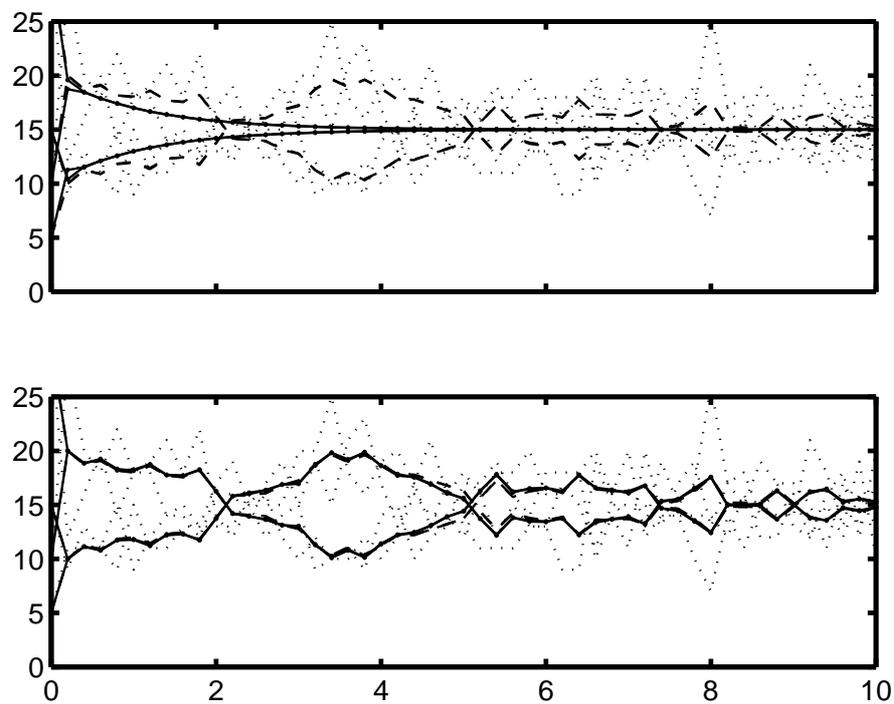}
  \caption{Dashed: homogenized solution for~\eqref{eq:rfun8_5} versus
  time. Solid: solution after 0 (top) and 1 (bottom) iteration of the
  parareal algorithm. For comparison, the corresponding
  non-homogenized trajectory is also plotted using a dotted line. For
  this example, a single step of the backward Euler method was a
  sufficient accurate coarse scale solver which therefore is extremely
  cheap to evaluate.}
  \label{fig:rfun8_5_sol}
\end{figure}

\begin{figure}[htp]
  \centering
  \includegraphics[width = 10cm]{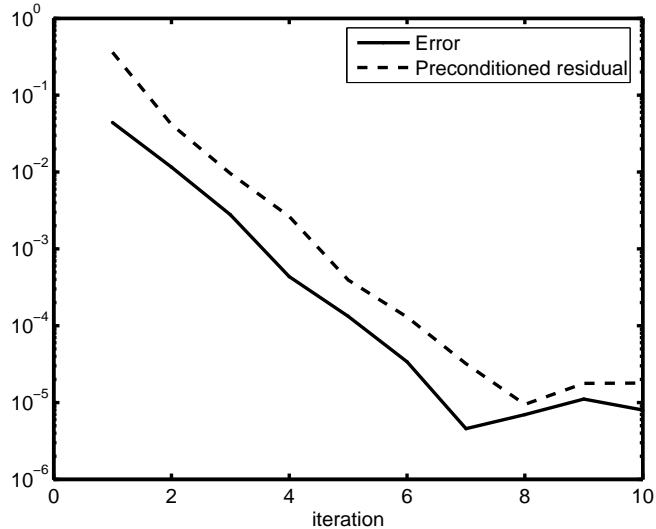}
  \caption{Convergence history for~\eqref{eq:rfun8_5} homogenized
  according to~\eqref{eq:Fineh}.}
  \label{fig:rfun8_5_err}
\end{figure}

\subsection{Stochastic reaction-diffusion}
\label{subsec:rdme}

There are many interesting chemical systems where the spatial
distribution of the species must be taken into account. The usual
thermodynamical equilibria assumption (``well stirredness'') no longer
holds as the transport of the molecules through the solvent is slow
compared to average reaction times or when some reactions are strongly
localized. The length scale over which the system can be regarded as
homogeneous is now much shorter.

Examples of when both the stochasticity of the reactions and the
spatial distribution are necessary to explain experimental data are
found in \cite{Dobrzynski, bistable_RDME, NSM, KruseSpat,
  noisyFuture}. For such systems, the diffusion at a molecular level
can be treated as a special set of linear propensities. This yields
the reaction-diffusion master equation (RDME) \cite[Ch.~8]{Gardiner},
\cite[Ch.~XIV]{VanKampen} which evolves the probability density of the
system in the same manner as the CME~\eqref{eq:Master}. However, the
dimensionality of the state-space is much higher and computing even a
single trajectory can be a very computationally intensive
problem. Note also that, as the rate equations now form the
reaction-diffusion PDE, the master operator defines a continuous
spectrum of scales so that a clear separation into slow/fast ones is
no longer possible.

We shall consider a small example in the present section as
follows. For $i = 1 \ldots 5$, denote by the triplet $(x,y,z)_{i}$ the
number of $X$-, $Y$- and $Z$-molecules in cell $i$. The cells each
have volume $\Omega$ and for simplicity are connected in an array
1-2-3-4-5. Within each cell $i$ we specify the \emph{reactions}
\begin{align}
  \label{eq:reactions}
  \left. \begin{array}{lll}
    X_{i}+Y_{i} &\xrightarrow{k_{a} x_{i}y_{i}/\Omega}& Z_{i} \\
    Z_{i} &\xrightarrow{k_{d} z_{i}}& X_{i}+Y_{i}
  \end{array} \right\},
\end{align}
compare~\eqref{eq:el1}--\eqref{eq:el4}. Furthermore, the molecules can
\emph{diffuse} to any neighboring cell $j = i \pm 1 \in
\{1,\ldots,5\}$ according to
\begin{align}
  \label{eq:diffusion}
  \left. \begin{array}{lll}
    X_{i} &\xrightarrow{d x_{i}/h^{2}}& X_{j} \\
    Y_{i} &\xrightarrow{d y_{i}/h^{2}}& Y_{j} \\
    Z_{i} &\xrightarrow{d z_{i}/h^{2}}& Z_{j}
  \end{array} \right\},
\end{align}
where $h \equiv \Omega^{1/3}$ is the length-scale. Finally, there are
also in- and outflow at the boundaries,
\begin{align}
  \label{eq:bflow}
  \left. \begin{array}{lllll}
    \emptyset &\xrightarrow{d N_{\Omega}/h^{2}}& X_{1} 
    \qquad & Z_{1} &\xrightarrow{d z_{1}/h^{2}} \emptyset \\
    \emptyset &\xrightarrow{d N_{\Omega}/h^{2}}& Y_{5} 
    \qquad & Z_{5} &\xrightarrow{d z_{5}/h^{2}} \emptyset
  \end{array} \right\}.
\end{align}

In order to fully describe the system we have chosen constants to
approximately coincide with those in \cite{bistable_RDME} for
\textit{E.~coli}, see also \cite{Berg_review}. The
model is thus parameterized by the integer $N_{\Omega}$ defining the
total volume through $V = 10^{-15} \times N_{\Omega}/25 = 5\Omega$
$(l)$. The rate constants are given by $k_{a} = 10^{8}$
$(M^{-1}s^{-1})$, $k_{d} = 10$ $(s^{-1})$ and $d = 10^{-10}$
$(m^{2}s^{-1})$. Since we have that $N_{\Omega} \propto \Omega$, for
convenience, we loosely refer to both quantities as the ``system
size''. The model is normalized around $N_{\Omega} = 25$ molecules per
cell and as initial data we simply took $(x_{i},y_{i},z_{i}) =
(N_{\Omega},N_{\Omega},0)$ in all cells.

For the parareal algorithm we again took $N = 50$ intervals with total
time $T = 1$. In the semi-discrete case, one can show that the
reaction-rate equations for the diffusion part~\eqref{eq:diffusion}
are equivalent to a mass-lumped FEM-method for the macroscopic
diffusion equation; this observation has been used to generalize the
mesoscopic model to more complicated geometries
\cite{master_spatial}. We note in passing that this setting opens
up for extremely efficient coarse-grain solvers based on multigrid
techniques when the spatial resolution increases.

In Figure~\ref{fig:rfun11_reduct} the error during each parareal step
is displayed. There is a trend with faster convergence as the system
size increases and despite the seemingly rather large error levels,
the solution obtained is visually very pleasing and can hardly be
distinguished from the exact one (cf.~Figure~\ref{fig:rfun11_sol}). In
Figure~\ref{fig:rfun11h_reduct} we have used instead the homogenized
fine scale solver~\eqref{eq:Fineh} and the convergence improves a
lot. Again, smoothing the output from the fine solver by integrating
over a short period of time has the effect of scaling down the
effective Lipschitz constant and convergence to the homogenized
solution becomes fast.

\begin{figure}[htp]
  \centering
  \includegraphics[width = 12cm]{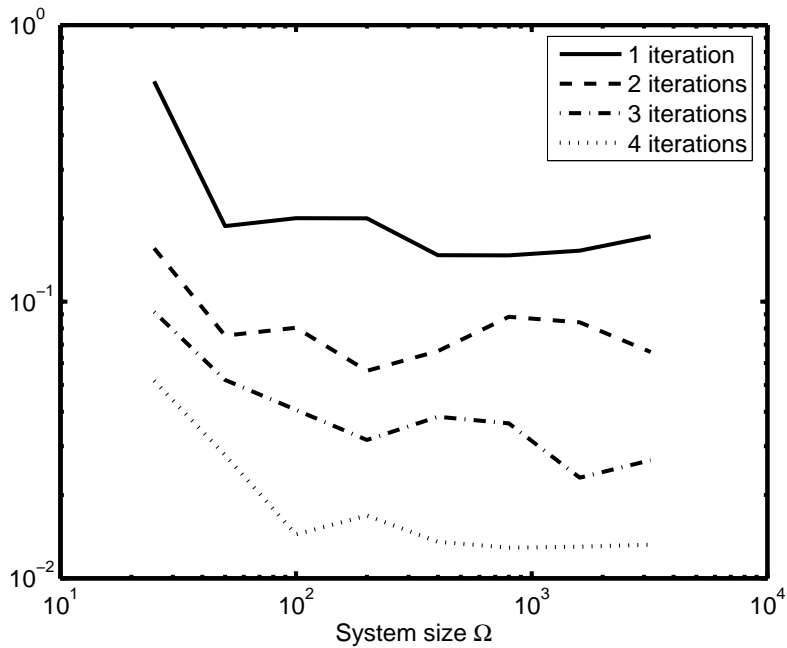}
  \caption{Relative error per the first few parareal iterations when
  the system size $\Omega$ increases.}
  \label{fig:rfun11_reduct}
\end{figure}

\begin{figure}[htp]
  \centering
  \includegraphics[width = 12cm]{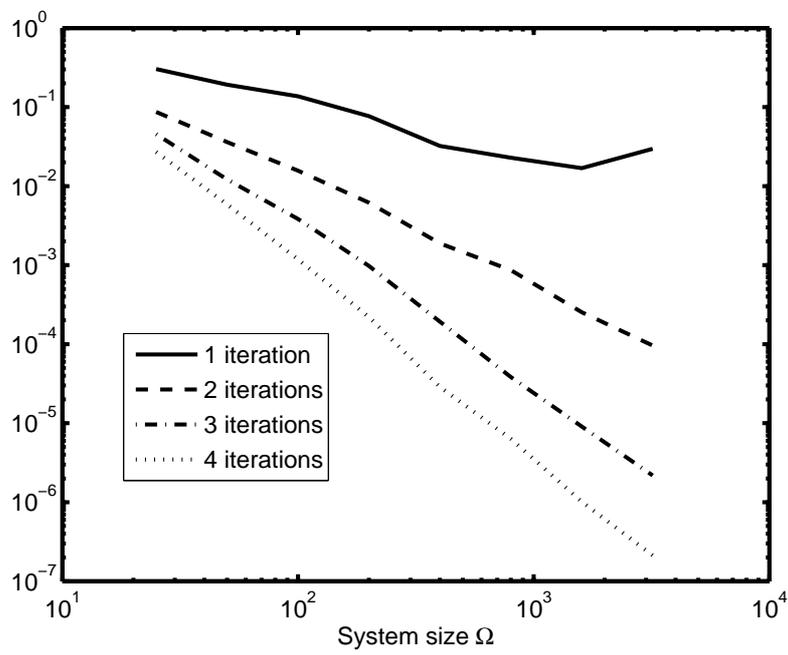}
  \caption{As in Figure~\ref{fig:rfun11_reduct} but using a
  homogenized fine solver~\eqref{eq:Fineh} with $\delta t =
  \dt/4$. The convergence to the homogenized solution is faster and
  more regular.}
  \label{fig:rfun11h_reduct}
\end{figure}

\begin{figure}[htp]
  \centering
  \includegraphics[width = 12cm]{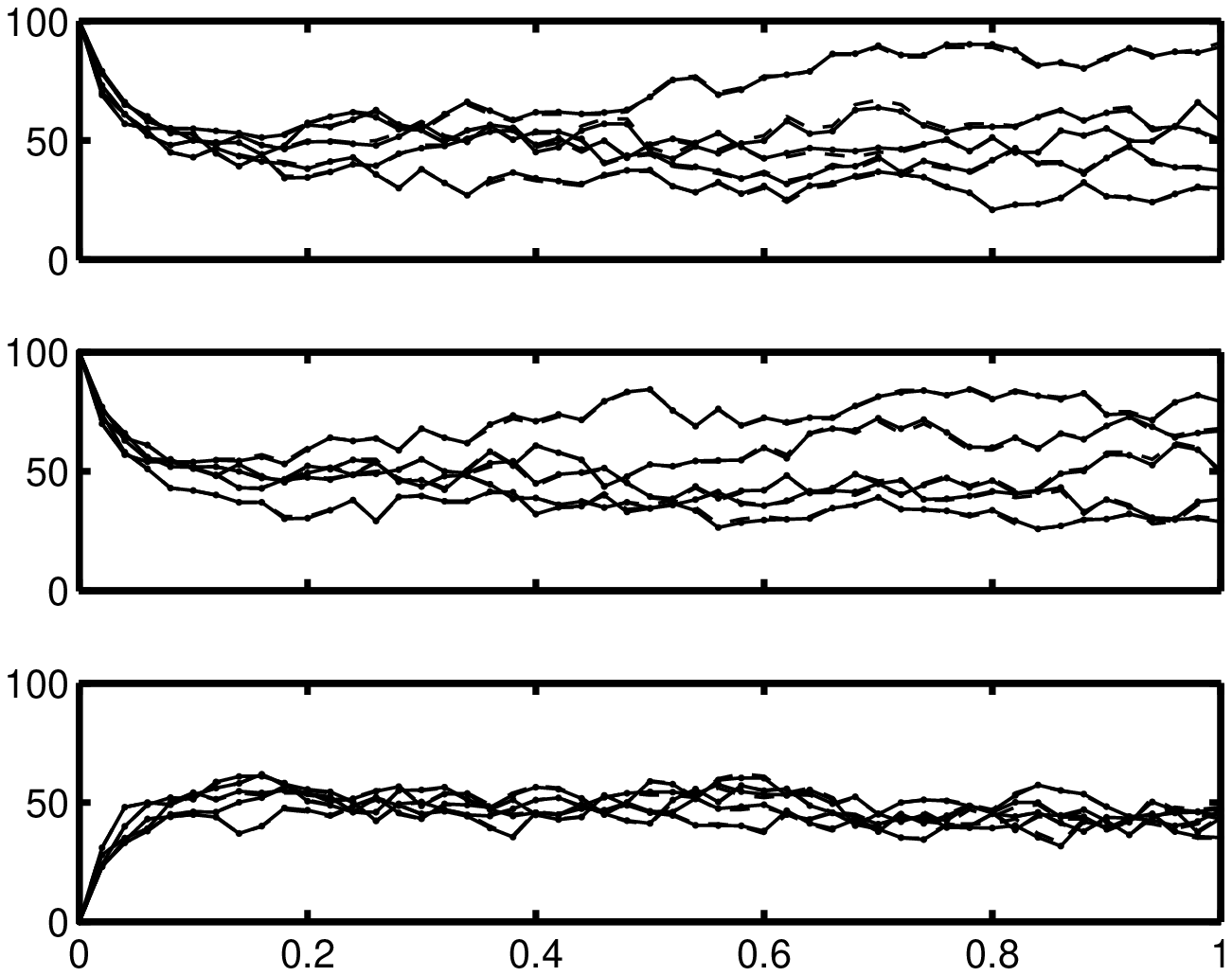}
  \caption{Sample solution for the reaction-diffusion
  system~\eqref{eq:reactions},~\eqref{eq:diffusion}
  and~\eqref{eq:bflow}. The system-size parameter is for this example
  $N_{\Omega} = 100$ molecules per cell. Solid: solution obtained
  after 4 parareal iterations over 50 processors, dashed and barely
  visible: exact trajectory. From top to bottom are the trajectories for
  the $X$-, the $Y$- and the $Z$-molecules, respectively. Towards the
  end of the simulation there is a strong negative correlation between
  $X$- and $Y$-molecules and there is also a spatial correlation due
  to the unsymmetric inflow~\eqref{eq:bflow}.}
  \label{fig:rfun11_sol}
\end{figure}

%**************************************************************************

\section{Conclusions}
\label{sec:concl}

The master equation, or equivalently, a special type of jump SDEs
accurately describes well stirred chemical reactions at the mesoscopic
level. Diffusion can be incorporated by a special set of reactions
implying that non-homogenous problems also can be modeled using the
same type of process. In the macroscopic limit the usual rate
equations (or the reaction-diffusion PDE) emerges.

The parareal algorithm can be applied by using the rate equations as a
predictor for the jump process. Convergence is then dictated by the
Lipschitz constant of the system, but also by the level of noise in
the solution. This noise vanishes in the macroscopic limit.

For models involving rapid transients the best one can generally hope
for is convergence in moment, but it is then not clear how the error
should be monitored. A remedy is to homogenize the model on the fly by
averaging the process in time. This homogenization is very general and
should be applicable to a wide range of stochastic time-dependent
problems. We remark that obtaining this homogenized solution is
impractical on a serial computer since the full solution must be
obtained first. Furthermore, the proposed homogenization can be put in
contrast to other multiscale methods \cite{smalltimesteps,
nestedSSA}. Typically one computes a solution to a homogenized
\emph{equation} obtained by determining some kind of averaged
coefficients. With the parareal algorithm, one rather directly obtains
a homogenized \emph{solution}.

The range of applicability of the method can be increased by adding a
white-noise term to the governing equations, thus encompassing highly
general L\'{e}vy-type SDEs \cite{LevySDEs}. In this general setting,
the suggested homogenization provides a way of applying the parareal
algorithm to stiff problems.

We would also like to relate our results to those in reference
\cite{pararealSDE} mentioned in the introduction. There it is shown
that the parareal/Euler forward combination for a \emph{nonstiff}
Wiener SDE yields a mean square estimate $\Ordo{\dt^{k/2}}$ for the
$k$th parareal iteration assuming a practically exact fine
solver. This holds true in the present context as well provided that
we use the appropriate version of the forward Euler method, namely the
tau-leap method \cite{tau_li}. \emph{However}, the resulting method is
highly nontrivial, if possible at all, to implement efficiently. The
reason is that the same realizations of Poisson processes have to be
used for both the coarse and the fine solver. The list of events
simulated by the fine solver must somehow be reused and searched
through leading to a very expensive coarse solver. An open question is
thus if one can somehow circumvent this issue.

One of the most promising applications of the proposed method seems to
be for reaction-diffusion models such as the one investigated in
Section~\ref{subsec:rdme}. Here the macroscopic model can typically be
acceptable in many, but not all, subvolumes. Some of the species are
typically present in fairly large copy numbers where noise is less
pronounced. A few iterations of the suggested parareal algorithm can
thus be understood as a kind of deterministic/stochastic hybrid
method. A feature with this set-up is that one never needs to
explicitly determine what parts should be treated deterministically.

%**************************************************************************

\section*{Acknowledgment}

Part of this work was initiated during a 7 month visit to the Division
of Applied Mathematics at Brown University, Providence, Rhode
Island. During this visit Jan Hesthaven and his family kindly provided
with both practical and personal support. At Brown University, Yvon
Maday supplied several references on the parareal algorithm.

During the time of writing, Ingemar Kaj offered valuable initial
suggestions and a much appreciated discussion. Finally, Per Lötstedt
and Andreas Hellander made several comments that improved the quality
of the manuscript.

%**************************************************************************

\bibliographystyle{plain}
%\bibliography{../../../stefan}

\begin{thebibliography}{10}

\bibitem{postleap}
D.~F. Anderson.
\newblock Incorporating postleap checks in tau-leaping.
\newblock {\em J.~Chem.~Phys}, 128(5):054103--054111, 2008.
\newblock \texttt{doi:10.1063/1.2819665}.

\bibitem{LevySDEs}
D.~Applebaum.
\newblock {\em L\'{e}vy Processes and Stochastic Calculus}, volume~93 of {\em
  Cambridge Studies in Advanced Mathematics}.
\newblock Cambridge University Press, Cambridge, 2004.

\bibitem{pararealMol}
L.~Baffico, S.~Bernard, Y.~Maday, G.~Turinici, and G.~Z\'{e}rah.
\newblock Parallel-in-time molecular-dynamics simulations.
\newblock {\em Phys.~Rev.~E}, 66(5):057701--057704, 2002.
\newblock \texttt{doi:10.1103/PhysRevE.66.057701}.

\bibitem{stabPararealPDE}
G.~Bal.
\newblock On the convergence and the stability of the parareal algorithm to
  solve partial differential equations.
\newblock In {\em Domain Decomposition Methods in Science and Engineering},
  volume~40 of {\em Lecture Notes in Computational Science and Engineering},
  pages 425--432. Springer, Berlin, 2005.
\newblock \texttt{doi:10.1007/3-540-26825-1\_43}.

\bibitem{pararealSDE}
G.~Bal.
\newblock Parallelization in time of (stochastic) ordinary differential
  equations, 2006.
\newblock Submitted. Available at
  \texttt{http://www.columbia.edu/$\sim$gb2030}.

\bibitem{Berg_review}
O.~G. Berg and P.~H. von Hippel.
\newblock Diffusion-controlled macromolecular interactions.
\newblock {\em Ann.~Rev.~Biophys.~Chem.}, 14(1):131--160, 1985.
\newblock \texttt{doi:10.1146/annurev.bb.14.060185.001023}.

\bibitem{smalltimesteps}
Y.~Cao, D.~Gillespie, and L.~Petzold.
\newblock Multiscale stochastic simulation algorithm with stochastic partial
  equilibrium assumption for chemically reacting systems.
\newblock {\em J.~Comput.~Phys.}, 206:395--411, 2005.
\newblock \texttt{doi:10.1016/j.jcp.2004.12.014}.

\bibitem{stab_tau_leap}
Y.~Cao, L.~R. Petzold, M.~Rathinam, and D.~T. Gillespie.
\newblock The numerical stability of leaping methods for stochastic simulation
  of chemically reacting systems.
\newblock {\em J.~Chem.~Phys.}, 121(24):12169--12178, 2004.
\newblock \texttt{doi:10.1063/1.1823412}.

\bibitem{pointProc}
D.~R. Cox and V.~Isham.
\newblock {\em Point processes}.
\newblock Monographs on applied probability and statistics. Chapman and Hall,
  London, 1980.

\bibitem{Dobrzynski}
M.~Dobrzy{\'n}ski, J.~V. Rodr{\'\i}guez, J.~A. Kaandorp, and J.~G. Blom.
\newblock Computational methods for diffusion-influenced biochemical reactions.
\newblock {\em Bioinformatics}, 23(15):1969--1977, 2007.
\newblock \texttt{doi:10.1093/bioinformatics/btm278}.

\bibitem{nestedSSA}
W.~E, D.~Liu, and E.~Vanden-Eijnden.
\newblock Nested stochastic simulation algorithm for chemical kinetic systems
  with disparate rates.
\newblock {\em J.~Chem.~Phys.}, 123(19), 2005.
\newblock \texttt{doi:10.1063/1.2109987}.

\bibitem{bistable_RDME}
J.~Elf and M.~Ehrenberg.
\newblock Spontaneous separation of bi-stable biochemical systems into spatial
  domains of opposite phases.
\newblock {\em Syst.~Biol.}, 1(2):230--236, 2004.
\newblock \texttt{doi:10.1049/sb:20045021}.

\bibitem{stochGene}
M.~B. Elowitz, A.~J. Levine, E.~D. Siggia, and P.~S. Swain.
\newblock Stochastic gene expression in a single cell.
\newblock {\em Science}, 297(5584):1183--1186, 2002.
\newblock \texttt{doi:10.1126/science.1070919}.

\bibitem{master_spatial}
S.~Engblom, L.~Ferm, A.~Hellander, and P.~L\"{o}tstedt.
\newblock Simulation of stochastic reaction-diffusion processes on unstructured
  meshes.
\newblock Technical Report 2008-012, Dept of Information Technology, Uppsala
  University, Uppsala, Sweden, 2008.
\newblock Available at \texttt{http://www.it.uu.se/research}.

\bibitem{Markovappr}
S.~N. Ethier and T.~G. Kurtz.
\newblock {\em Markov processes: characterization and convergence}.
\newblock Wiley series in Probability and Mathematical Statistics. John Wiley
  \& Sons, New York, 1986.

\bibitem{NSM}
D.~Fange and J.~Elf.
\newblock Noise-induced min phenotypes in \textit{E.~coli}.
\newblock {\em PLoS Comput.~Biol.}, 2(6):637--648, 2006.
\newblock \texttt{doi:10.1371/journal.pcbi.0020080}.

\bibitem{pararealSurv}
M.~J. Gander and S.~Vandewalle.
\newblock Analysis of the parareal time-parallel time-integration method.
\newblock {\em SIAM J.~Sci.~Comput.}, 29(2):556--578, 2007.
\newblock \texttt{doi:10.1137/05064607X}.

\bibitem{Gardiner}
C.~W. Gardiner.
\newblock {\em Handbook of Stochastic Methods}.
\newblock Springer Series in Synergetics. Springer-Verlag, Berlin, 3rd edition,
  2004.

\bibitem{toggle_switch}
T.~S. Gardner, C.~R. Cantor, and J.~J. Collins.
\newblock Construction of a genetic toggle switch in {E}scherichia coli.
\newblock {\em Nature}, 403:339--342, 2000.
\newblock \texttt{doi:10.1038/35002131}.

\bibitem{gillespiemodern}
M.~A. Gibson and J.~Bruck.
\newblock Efficient exact stochastic simulation of chemical systems with many
  species and many channels.
\newblock {\em J.~Phys.~Chem.}, 104(9):1876--1889, 2000.
\newblock \texttt{doi:10.1021/jp993732q}.

\bibitem{SDEs}
I.~I. Gihman and A.~V. Skorohod.
\newblock {\em Stochastic differential equations}, volume~72 of {\em Ergebnisse
  der Mathematik und ihrer Grenzgebiete}.
\newblock Springer, Berlin, 1972.

\bibitem{gillespie}
D.~T. Gillespie.
\newblock A general method for numerically simulating the stochastic time
  evolution of coupled chemical reactions.
\newblock {\em J.~Comput.~Phys.}, 22(4):403--434, 1976.
\newblock \texttt{doi:10.1016/0021-9991(76)90041-3}.

\bibitem{gillespieCME}
D.~T. Gillespie.
\newblock A rigorous derivation of the chemical master equation.
\newblock {\em Physica A}, 188:404--425, 1992.
\newblock \texttt{doi:10.1016/0378-4371(92)90283-V}.

\bibitem{tau_leap}
D.~T. Gillespie.
\newblock Approximate accelerated stochastic simulation of chemically reacting
  systems.
\newblock {\em J.~Chem.~Phys.}, 115(4):1716--1733, 2001.
\newblock \texttt{doi:10.1063/1.1378322}.

\bibitem{guptasarama}
P.~Guptasarama.
\newblock Does replication-induced transcription regulate synthesis of the
  myriad low copy number proteins of \textit{{E}scherichia coli}?
\newblock {\em Bioessays}, 17(11):987--997, 1995.
\newblock \texttt{doi:10.1002/bies.950171112}.

\bibitem{haseltine_HSSA}
E.~L. Haseltine and J.~B. Rawlings.
\newblock Approximate simulation of coupled fast and slow reactions for
  stochastic chemical kinetics.
\newblock {\em J.~Chem.~Phys.}, 117(15), 2002.
\newblock \texttt{doi:10.1063/1.1505860}.

\bibitem{SDEsDiffusion}
N.~Ikeda and S.~Watanabe.
\newblock {\em Stochastic Differential Equations and Diffusion Processes},
  volume~24 of {\em North-Hollans Mathematical Library}.
\newblock North-Holland/Kodansha, Amsterdam/Tokyo, 1981.

\bibitem{stochLimits}
J.~Jacod and A.~N. Shiryaev.
\newblock {\em Limit Theorems for Stochastic Processes}, volume 288 of {\em
  Grundlehren der mathematischen Wissenschaften}.
\newblock Springer, Berlin, 1987.

\bibitem{VanKampen}
{\noopsort{Kampen}}{N.~G.~van Kampen}.
\newblock {\em Stochastic Processes in Physics and Chemistry}.
\newblock Elsevier, Amsterdam, 5th edition, 2004.

\bibitem{KruseSpat}
K.~Kruse and J.~Elf.
\newblock Kinetics in spatially extended systems.
\newblock In Z.~Szallasi, J.~Stelling, and V.~Periwal, editors, {\em System
  Modeling in Cellular Biology. From Concepts to Nuts and Bolts}, pages
  177--198. MIT Press, Cambridge, MA, 2006.

\bibitem{tau_li}
T.~Li.
\newblock Analysis of explicit tau-leaping schemes for simulating chemically
  reacting systems.
\newblock {\em Multiscale Model.~Simul.}, 6(2):417--436, 2007.
\newblock \texttt{doi:10.1137/06066792X}.

\bibitem{stiffSDEimpl}
T.~Li, A.~Abdulle, and W.~E.
\newblock Effectiveness of implicit methods for stiff stochastic differential
  equations.
\newblock {\em Commun.~Comput.~Phys.}, 3(2):295--307, 2008.

\bibitem{parareal1}
J.-L. Lions, Y.~Maday, and G.~Turinici.
\newblock R\'{e}solution d'{EDP} par un sch\'{e}ma en temps ``parar\'{e}el''.
\newblock {\em C.~R.~Acad.~Sci.~Paris~S\'{e}r.~I~Math.}, 332(7):661--668, 2000.
\newblock \texttt{doi:10.1016/S0764-4442(00)01793-6}.

\bibitem{pararealMreduct}
Y.~Maday.
\newblock Parareal in time algorithm for kinetic systems based on model
  reduction.
\newblock In A.~Bandrauk, M.~C. Delfour, and C.~Le Bris, editors, {\em
  High-Dimensional Partial Differential Equations in Science and Engineering},
  volume~41 of {\em CRM Proceedings \& Lecture Notes}, pages 183--194. AMS/CRM,
  2007.

\bibitem{pararealctrl}
Y.~Maday and G.~Turinici.
\newblock A parareal in time procedure for the control of partial differential
  equations.
\newblock {\em C.~R.~Acad.~Sci.~Paris~S\'{e}r.~I~Math.}, 335(4):387--392, 2002.
\newblock \texttt{doi:10.1016/S1631-073X(02)02467-6}.

\bibitem{noisyBusiness}
H.~H. McAdams and A.~Arkin.
\newblock It's a noisy business. genetic regulation at the nanomolar scale.
\newblock {\em Trends Gen.}, 15(2):65--69, 1999.
\newblock \texttt{doi:10.1016/S0168-9525(98)01659-X}.

\bibitem{noisyFuture}
R.~Metzler.
\newblock The future is noisy: The role of spatial fluctuations in genetic
  switching.
\newblock {\em Phys.~Rev.~Lett.}, 87(6):068103--068107, 2001.
\newblock \texttt{doi:10.1103/PhysRevLett.87.068103}.

\bibitem{noisygeneregul}
J.~Paulsson, O.~G. Berg, and M.~Ehrenberg.
\newblock Stochastic focusing: Fluctuation-enhanced sensitivity of
  intracellular regulation.
\newblock {\em Proc.~Natl.~Acad.~Sci.~USA}, 97(13):7148--7153, 2000.
\newblock \texttt{doi:10.1073/pnas.110057697}.

\bibitem{ME2SDE}
S.~Plyasunov.
\newblock On hybrid simulation schemes for stochastic reaction dynamics, 2005.
\newblock Available at \texttt{http://arxiv.org/abs/math/0504477}.

\bibitem{noisygene}
J.~M. Raser and E.~K. O'Shea.
\newblock Noise in gene expression: Origins, consequences and control.
\newblock {\em Science}, 309(5743):2010--2013, 2005.
\newblock \texttt{doi:10.1126/science.1105891}.

\bibitem{impl_tau_leap}
M.~Rathinam, L.~Petzold, Y.~Cao, and D.~T. Gillespie.
\newblock Stiffness in stochastic chemically reacting systems: The implicit
  tau-leaping method.
\newblock {\em J.~Chem.~Phys.}, 119(24):12784--12794, 2003.
\newblock \texttt{doi:10.1063/1.1627296}.

\bibitem{tau_leap_anal}
M.~Rathinam, L.~R. Petzold, Y.~Cao, and D.~T. Gillespie.
\newblock Consistency and stability of tau-leaping schemes for chemical
  reaction systems.
\newblock {\em Multiscale Model.~Simul.}, 4(3):867--895, 2005.
\newblock \texttt{doi:10.1137/040603206}.

\bibitem{parareal}
G.~A. Staff.
\newblock The parareal algorithm, 2003.
\newblock Available at \texttt{www.idi.ntnu.no/$\sim$elster/notur-cluster03}.

\bibitem{pararealAnal}
G.~A. Staff, Y.~Maday, and E.~M. R{\o}nquist.
\newblock The parareal-in-time algorithm: basics, stability analysis and more,
  2006.
\newblock Submitted. Available at
  \texttt{http://www.springerlink.com/index/lp436711n8174312.pdf}.

\bibitem{newsteadystates_RDME}
Y.~Togashi and K.~Kaneko.
\newblock Molecular discreteness in reaction-diffusion systems yields steady
  states not seen in the continuum limit.
\newblock {\em Phys.~Rev.~E}, 70(2):020901--1--020901--4, 2004.
\newblock \texttt{doi:10.1103/PhysRevE.70.020901}.

\bibitem{circadian2D}
J.~M.~G. Vilar, H.~Y. Kueh, N.~Barkai, and S.~Leibler.
\newblock Mechanism of noise-resistance in genetic oscillators.
\newblock {\em Proc.~Nat.~Acad.~Sci.}, 99:5988--5992, 2002.
\newblock \texttt{doi:10.1073/pnas.092133899}.

\end{thebibliography}

\providecommand{\noopsort}[1]{}

\end{document}